\newtheorem{theorem}{Theorem}[section]
\newtheorem{lemma}[theorem]{Lemma}
\newtheorem{corollary}[theorem]{Corollary}
\newtheorem{proposition}[theorem]{Proposition}
\theoremstyle {definition}
\newtheorem{remark}[theorem]{Remark}
\DeclareMathOperator{\area}{area} 
\newcommand{\R}{\mathbb{R}}
\DeclareMathOperator{\Ric}{Ric}
\DeclareMathOperator{\barea}{area_{\bar g}}
\title[Uniqueness of stable CMC surfaces] 
{Global uniqueness of large stable CMC spheres in asymptotically flat Riemannian three-manifolds} 
\author{Otis Chodosh}
\address{Stanford University, Department of Mathematics, Building 380, Stanford, CA 94305, United States}
\email{ochodosh@stanford.edu}
\author{Michael Eichmair}
\address{University of Vienna, Faculty of Mathematics, Oskar-Morgenstern-Platz 1, 1090 Vienna, Austria}
\email {michael.eichmair@univie.ac.at}
\begin{document}

\begin{abstract} Let $(M, g)$ be a complete Riemannian $3$-manifold that is asymptotic to Schwarzschild with positive mass and whose scalar curvature vanishes. We \textsl{unconditionally} characterize the large, embedded stable constant mean curvature spheres in $(M, g)$. 
\end{abstract}

\maketitle

\date{}


\section {Introduction} \label{sec:introduction}

The purpose of this paper is to complete the classification of large, embedded stable constant mean curvature spheres in initial data sets of general relativity. We briefly review some background before stating our results.

Let $(M, g)$ be a connected, complete Riemannian $3$-manifold. 

We usually require that $(M, g)$ is $C^{k}$-\textsl{asymptotic to Schwarzschild} with mass $m>0$. This means that there is a non-empty compact set $K \subset M$ and a diffeomorphism 
\begin{align*}
M \setminus K \cong \{x \in \R^3 : |x| > 1/2\}
\end{align*}
such that, in this \textsl{chart at infinity}, there holds
\begin{align} \label{eqn:stronglyAF}
g_{ij} = \Bigl(1 + \frac{m}{2 |x|} \Bigl)^4 \delta_{ij} + \sigma_{ij}
\end{align}
where 
\[
\partial_I \sigma_{ij} = O (|x|^{-2 - |I|}) \qquad \text{ as} \qquad |x| \to \infty
\]
for every multi-index $I$ of order $|I| \leq k$. Moreover, we usually require that $(M, g)$ has \textsl{horizon boundary}. This means that $\partial M$ is a possibly empty minimal surface and that every closed minimal surface in $(M, g)$ is contained in $\partial M$. Together, these assumptions imply that $M$ is diffeomorphic to $\R^3$ with finitely many balls removed; see e.g.~Section 4 in \cite{Huisken-Ilmanen:2001}.

Let $\Sigma \subset M$ be a closed two-sided surface. The \textsl{Hawking mass} of such a surface is the quantity 
\[
m_H (\Sigma) = \sqrt { \frac{ \area (\Sigma)}{ 16 \pi} } \left( 1 - \frac{1}{16 \pi}\int_\Sigma H^2 \, d \mu \right)
\]
where $H$ is the mean curvature scalar of $\Sigma$ associated with a designated unit normal $\nu$. In a context where this makes sense, we choose $\nu$ to be the \textsl{outward pointing} unit normal. Our conventions are such that a unit sphere in $\R^3$ has mean curvature $2$.
We also recall  that the mean curvature $H$ is constant if and only if $\Sigma$ is a critical point for the area functional among volume preserving deformations. Moreover, such a constant mean curvature surface is a \textsl{stable} critical point for this variational problem if and only if 
\[
\int_\Sigma \left(  |h|^2 + \Ric(\nu, \nu) \right) u^2 d \mu \leq \int_\Sigma |\nabla u|^2 d \mu \quad \text{ for all $u \in C^\infty(\Sigma)$ with $\int_\Sigma u \, d \mu = 0$}
\]
where $h$ is the second fundamental form of $\Sigma$ and $\Ric$ is the Ricci tensor of $(M, g)$. In \cite{Christodoulou-Yau:1988}, D.~Christodoulou and S.-T.~Yau have observed that $m_H (\Sigma) \geq 0$ for every stable constant mean curvature sphere $\Sigma \subset M$ provided that the scalar curvature $R$ of $(M, g)$ is non-negative; cf.~Theorem \ref{theo:CY-sphere-intro} below. This insight has initiated the use of such spheres and their Hawking mass to test the strength of the gravitational field in the case where $(M, g)$ arises as a maximal Cauchy surface of a spacetime.

Assume now that $(M, g)$ is asymptotic to Schwarzschild with mass $m > 0$. A foundational result of G.~Huisken and S.-T.~Yau \cite{Huisken-Yau:1996} shows that the complement of a compact subset of $M$ admits a foliation by distinguished stable constant mean curvature spheres. This \textsl{canonical foliation} gives rise to a definition of a \textsl{geometric center of mass} of $(M, g)$. The characterization of these special spheres has been refined further in important work by J.~Qing and G.~Tian \cite{Qing-Tian:2007}. We summarize these results in Appendix \ref{sec:cond-results}. These characterizations have been extended in works by J.~Metzger and the second-named author \cite{stablePMT}, S.~Brendle and the second-named author \cite{Brendle-Eichmair:2014}, A.~Carlotto and the authors \cite{mineffectivePMT}, in our paper \cite{CE:far-outlying}, and the recent paper by \cite{EichmairKoerber} by T.~Koerber and second-named author. 

Here we complete this line of inquiry by establishing the following \textsl{unconditional uniqueness} result for the leaves of the canonical foliation.  

\begin{theorem} \label{thm:scal-zero-global}
Let $(M, g)$ be a connected, complete Riemannian $3$-manifold whose scalar curvature vanishes and which is $C^{5}$-asymptotic to Schwarzschild of mass $m>0$ with horizon boundary. Every connected, closed, embedded stable constant mean curvature surface in $(M, g)$ of large enough area is a leaf of the canonical foliation. 
\end{theorem}

The assumptions on $(M, g)$ in Theorem \ref{thm:scal-zero-global} are optimal in a number of ways: 
\begin{enumerate} [(i)]
\item The assumption \textsl{vanishing scalar curvature} cannot be relaxed to the assumption \textsl{non-negative scalar curvature}. A counterexample is given in our paper \cite{CE:far-outlying}. The subtle role of scalar curvature in this problem was elucidated in earlier work \cite{Brendle-Eichmair:2014} by S.~Brendle and the second-named author.
\item The assumption that $(M, g)$ be \textsl{asymptotic to Schwarzschild with mass $m >0$} cannot be relaxed to \textsl{asymptotically flat}. Indeed, A.~Carlotto and R.~Schoen \cite{Carlotto-Schoen} have constructed non-flat examples of asymptotically flat metrics on $\R^3$ with vanishing scalar curvature that are equal to the Euclidean metric in a half-space. Note that all the coordinate spheres in the Euclidean half-space are stable constant mean curvature spheres. In particular, the conclusion of Theorem \ref{thm:scal-zero-global} fails dramatically in these examples. However, these asymptotically flat manifolds \textsl{do admit} a canonical foliation by stable constant mean curvature spheres, as has been shown by Ch.~Nerz \cite{Nerz:2014}. Even in this setting, the leaves of this foliation are the \textsl{unique} solutions of the isoperimetric problem for the volume they enclose, as has been shown in our joint paper with Y.~Shi and H.~Yu \cite{CESY}. 
\end{enumerate}

The classification of connected, closed, embedded constant mean curvature surfaces in one half of \textsl{exact} spatial Schwarzschild, 
\[
M = \{x \in \R^3 : |x| \geq m/2\} \qquad \text{ and } \qquad g_{ij} = \Big(1 + \frac{m}{2 |x|} \Big)^4 \delta_{ij},
\] 
was a long-standing problem that has been solved by S.~Brendle \cite{Brendle:2013}. The only such surfaces are the centered coordinate spheres $\{x \in \R^3 : |x| = r\}$. Brendle's method uses the \textsl{exact} warped-product structure of Schwarzschild  in an essential way. In pioneering earlier work, H.~Bray \cite{Bray:1997} has characterized these spheres as the unique solutions of the isoperimetric problem in exact Schwarzschild. J.~Metzger and the second-named author have extended Bray's approach to \textsl{asymptotically} Schwarzschild manifolds in \cite{isostructure, hdiso}.

The proof of Theorem \ref{thm:scal-zero-global} is given in Section \ref{sec:global}. The main new ingredient is Theorem \ref{thm:main}, whose proof in Section \ref{sec:proof-thm-main} occupies the bulk of this paper. To state this result, we use $S_r$ to denote the surface in $M$ that corresponds to $\{x \in \R^3 : |x| =r \}$ and by $B_r$ the bounded open region in $M$ that is enclosed by $S_r$. Given a subset $A \subset M$, we let  
\[
r_0(A) = \sup \{ r > 1 : A \cap B_r = \emptyset\}.
\]
We say that a connected, closed surface $\Sigma \subset M$ is \textsl{outlying} if it bounds a compact region in $M$ that is disjoint from $B_1$. 

\begin{theorem} \label{thm:main}
Let $(M,g)$ be a connected, complete Riemannian $3$-manifold whose scalar curvature is non-negative and which is $C^{2}$-asymptotic to Schwarzschild with mass $m>0$. There is a constant $\eta >0$ with the following property. For every connected, closed stable constant mean curvature surface $\Sigma\subset M$ that is outlying, we have that
\begin{align*} 
r_0(\Sigma) H(\Sigma) \geq \eta
\end{align*}
provided that $\area (\Sigma)$ and $r_0 (\Sigma)$ are sufficiently large. 
\end{theorem}

We briefly describe the main ideas of the proof of Theorem \ref{thm:main}.  

Consider $\Sigma \subset M$ a large stable constant mean curvature spheres, where $(M, g)$ is as in Theorem \ref{thm:main}. Assume that $\area(\Sigma)$ is large. Previously deployed strategies cannot handle the insidious combination of $\Sigma$ outlying \textsl{and} slow divergence, i.e.~$r_0(\Sigma) H(\Sigma)$ small. On the one hand, the flux integrals used in \cite{Huisken-Yau:1996, Qing-Tian:2007} all vanish simultaneously on outlying surfaces. On the other hand, the Lyapunov--Schmidt analysis applied in \cite{Brendle-Eichmair:2014} cannot handle slow divergence; the background non-linearity is too strong. 

Note that in \cite{Huisken-Yau:1996,Qing-Tian:2007,Brendle-Eichmair:2014,CE:far-outlying}, stability is only used to obtain \textsl{roundness estimates}, while centering is shown using first variation. By contrast, the centering mechanism we discover and put to good use in the proof of Theorem \ref{thm:main} is based on stability. It leverages a surprising tension between the \emph{sharp} classical Minkowski inequality and the following estimate for the Hawking mass derived from the stability of $\Sigma$.

\begin{theorem} [D.~Christodoulou and S.-T.~Yau \cite{Christodoulou-Yau:1988}] \label{theo:CY-sphere-intro}
Let $(M, g)$ be a Riemannian $3$-manifold and $\Sigma \subset M$ be a stable constant mean curvature sphere. Then
\begin{equation}\label{eq:CY-intro}
\frac 23 \int_\Sigma (R + |\mathring h|^2) d\mu \leq 16\pi - \int_\Sigma H^2 d\mu.
\end{equation}
Here, $\mathring h$ is the trace-free part of the second fundamental form $h$ of $\Sigma$.
\end{theorem}

In the context of the proof of Theorem \ref{thm:main}, if we compare the quantity on the right-hand side of \eqref{eq:CY-intro} with its Euclidean counterpart 
\[
16 \pi - \int_\Sigma \bar H^2 d \bar \mu = - 2 \int_\Sigma |\mathring {\bar h}|^2 d \bar \mu,
\]
we can isolate a favorable term 
\begin{align} \label{playoffSchwarzschild}
m^2  \int_{\Sigma} \frac{\bar g(X,\bar\nu)^{2}}{|x|^{6}} d\bar\mu
\end{align}
owing to the Schwarzschild background. An upper bound for \eqref{playoffSchwarzschild} gives a lower bound on $r_0(\Sigma)$. 

In order to capitalize on the potential of \eqref{playoffSchwarzschild}, we have to keep in check the deviation from constant of the \textsl{Euclidean} mean curvature of $\Sigma$. Control in $L^{2}$ of this deviation is sufficient for us, so at first pass, we refer to a quantitative version of Schur's lemma due to C.~De Lellis and S.~M\"uller \cite{DeLellis-Muller:2005}. This leads to the a priori estimate 
\[
\bar H = H \, O (1)
\]
for the Euclidean mean curvature. In conjunction with Allard's theorem and a blow-down argument, this estimate gives us strong analytic control on $\Sigma$. More precisely, we see that $\Sigma$ is close in $C^{1, \alpha}$ to a large coordinate sphere in the chart at infinity. 

The trouble with the application of the De Lellis--M\"uller estimate above is that it uses up too much of the favorable terms   
\begin{align} \label{errorbendingaux}
\int_\Sigma |\mathring {\bar h}|^2 d \bar \mu  \qquad \text{ and } \qquad  \int_{\Sigma} |\mathring h|^{2}d\mu
\end{align}
coming from the Euclidean bending energy and the Christodoulou--Yau estimate. However, at this point, the proximity in $C^{1,\alpha}$ of $\Sigma$ to a large coordinate sphere allows us to show that the classical Minkowski inequality -- with the \textsl{sharp} constant -- \textsl{almost} holds for $\Sigma$; see Appendix \ref{sec:Minkowski}. Using this in place of the De Lellis--M\"uller estimate, we obtain improved roundness estimates for $\Sigma$. We can then absorb all remaining error terms into the favorable terms \eqref{errorbendingaux}. This leads to the desired estimate.

\subsection*{Acknowledgments} 

We are grateful to Hubert Bray, Simon Brendle, Gerhard Huisken, and Jan Metzger for many helpful conversations, as well as the referees and Thomas Koerber for their helpful suggestions concerning the exposition. Otis Chodosh has been supported at various times by the EPSRC grant EP/K00865X/1, the Oswald Veblen Fund, the NSF grants No.~1638352 and No.~1811059/2016403, a Sloan Fellowship, and a Terman Fellowship. Michael Eichmair has been supported by the START-Project Y963-N35 of the Austrian Science Fund. 


\section{Some properties of far-out stable CMC surfaces} \label{sec:far-out-CMC}

The following main result of this section extends the uniqueness theorems for the leaves of the canonical foliation stated in Appendix \ref{sec:cond-results} to surfaces of arbitrary genus.

\begin {proposition} \label{prop:largestablecmcsphere}
Let $(M, g)$ be a Riemannian $3$-manifold that is $C^2$-asymptotically flat of rate $q > 1/2$. Every connected, closed stable constant mean curvature surface $\Sigma \subset M$ with both $r_0(\Sigma) > 1$ and $\area(\Sigma) > 1$  sufficiently large has genus zero.
\end {proposition}

Note that there are no assumptions on the sign of the scalar curvature or the boundary of $M$. 

For the proof of Proposition \ref{prop:largestablecmcsphere} and other results below, we assume for convenience and without loss of generality that $M = \R^3$ and 
\begin{align} \label{def:AF}
g_{ij} = \delta_{ij} + \zeta_{ij}
\end{align}
where
\[
\partial_I \zeta_{ij} = O(|x|^{-q-|I|}) \qquad \text{ as } |x| \to \infty
\]
for every multi-index $I$ of order $|I|=0,1,2$. The general case is modeled on such \textsl{ends}. 

Consider a sequence $\{\Sigma_k\}_{k = 1}^\infty$ of connected, closed, embedded stable constant mean curvature surfaces with 
\[
r_0 (\Sigma_k) = \sup \{ r > 1 : \Sigma_k \cap B_r = \emptyset\} \to \infty \qquad \text{ and } \qquad \area (\Sigma_k) \to \infty,
\]
where we abbreviate $B_r = B_r(0)$.

Along the way of proving Proposition \ref{prop:largestablecmcsphere}, we show that the surfaces $\Sigma_k$ are close to coordinate spheres in a sense we make precise. In the case where $\Sigma_k$ are \emph{a priori} known to have genus zero, we could follow \cite[p.~1099]{Qing-Tian:2007} instead of the argument given below.

\begin{proposition} \label{prop:area-rescaling}
Assume that $r_0(\Sigma_k)H(\Sigma_k) \to 0$. Let $\rho_k = 2/H(\Sigma_k)$ be the mean curvature radius. After passing to a subsequence, the  rescaled surfaces $\rho_k^{-1} \Sigma_k$ converge to a coordinate sphere $S_1 (\xi)$ in $\R^3 \setminus \{0\}$ with $|\xi| = 1$. More precisely, given $K \subset \R^3\setminus\{0\}$ compact, there are functions $u_k : S_1 (\xi) \to \R$ with $u_k \to 0$ in $C^3$ such that $(\rho_k^{-1} \Sigma_k) \cap K$ is contained in the radial graph of $u_k$ above $S_1(\xi)$. 
\end{proposition}

\begin{proof}
First, note that 
\begin{equation}\label{eq:sec-2-CY-all-genus}
\area(\Sigma_k) H(\Sigma_k)^2 \leq \frac{64\pi}{3} + o(1)
\end{equation}
as $k \to \infty$, by Lemma \ref{lemm:CY-all-genus}.
Moreover, 
\begin{align} \label{auxproprh0}
 |x|\, |h (x)| = O (1) \qquad \textrm{and} \qquad \sup_{r> 0} \frac{\area(\Sigma_k \cap B_r)}{r^2} = O (1).
\end{align}
by Lemma \ref{lemm:sff-bds-angst-sph} and Lemma \ref{lem:qaq}. Using also Lemma \ref{lemm:h-barh-diff-std-est}, we see that these estimates also hold if we use the Euclidean metric $\bar g$ instead of $g$. By scaling, we see that they continue to hold if we replace $\Sigma_k$ by the rescaled surface $\rho_k^{-1} \Sigma_k$. These estimates imply that locally in $\R^3 \setminus \{0\}$ and on the scale of the distance to the origin, each surface is a union of an (a priori) bounded number of graphs with (a priori) bounded $C^2$-norms. They enable us to extract convergent subsequences from $\{ \rho_k^{-1} \Sigma_k\}_{k = 1}^\infty$ in the class of proper, pointed immersions into $\R^3 \setminus \{0\}$. (See e.g.~\cite{Cooper} for very general convergence results of this type.) To ensure that the limit is non-empty, we need to choose the base points with care. To that end, let 
\[
r_1(\Sigma_k) : = \inf\{r > 1 : \Sigma_k \subset B_r\}
\]
and note that $H(S_r) \sim 2/r$ for $r > 1$ large. By the maximum principle, 
$\rho_k \leq O(1)r_1 (\Sigma_k)$. After passing to subsequence, we may thus choose $x_k \in \rho_k^{-1} \Sigma_k$ such that $\{ x_k \}_{k = 1}^\infty$ converges to a point $\neq 0$. Passing to a further subsequence, we extract a limiting  connected, proper stable constant mean curvature immersion 
\[
\varphi_\infty : \Sigma_\infty \to \R^3 \setminus \{0\}
\]
with base point $x_\infty \in \Sigma_\infty$ so $\varphi_\infty (x_\infty) = \lim_{k \to \infty}  x_k$. Note that the mean curvature of this immersion is equal to $2$. The bounds \eqref{auxproprh0} descend to this immersion. We apply a variation of the result of J.~Barbosa and M.~do Carmo \cite{Barbosa-doCarmo:1984} as developed (to handle the singularity at the origin; see also \cite[Lemma 18]{CEV}) by F.~Morgan and M.~Ritor\'e \cite{Morgan-Ritore:2002} to show that $\varphi_\infty$ is totally umbilic. From the assumption that $r_0(\Sigma_k)H(\Sigma_k)\to0$, we see that 
\[
\overline{\varphi_\infty(\Sigma_\infty)} = S_1(\xi)
\]
where $|\xi| =1$. Considering different choices of base points $x_k$, we see that $\rho_k^{-1} \Sigma_k$ converges to the union of unit radius coordinate spheres, though possibly with multiplicity. However, by \eqref{eq:sec-2-CY-all-genus}, the area of the limit counted with multiplicity is at most $16\pi /3<8\pi$. It follows that there is exactly one such limiting sphere and that the convergence occurs with multiplicity one.\footnote{Alternatively, one could avoid \eqref{eq:sec-2-CY-all-genus} here and instead use that the union of two disjoint spheres in $\R^3$ is an \emph{unstable} constant mean curvature surface.} This completes the proof. \end{proof}

In the case where $ \liminf_{k \to \infty} r_0(\Sigma_k)H(\Sigma_k) > 0$, we show below that the surfaces $\Sigma_k$ can be captured by standard methods. 

\begin{proposition}\label{prop:area-rescaling-trivial}
Assume that $\liminf_{k\to\infty} r_0(\Sigma_k)H(\Sigma_k) >0$. For every sufficiently large $k$, the rescaled surface $r_0(\Sigma_k)^{-1}\Sigma_k$ is smoothly close to a coordinate sphere in $\R^3 \setminus B_1$ with bounded, possibly small radius. In particular, $\Sigma_k$ has genus $0$.
\end{proposition}
\begin{proof}
Assume first that $ r_0(\Sigma_k) H(\Sigma_k) = O (1)$. Passing to a subsequence, the mean curvature of the rescaled surfaces has a positive limit. Curvature estimates for stable constant mean curvature surfaces with bounded mean curvature (see e.g.~\cite[Proposition 2.2]{stablePMT}) together with \cite{Barbosa-doCarmo:1984} show that the rescaled surfaces converge to a coordinate sphere in $\R^3 \setminus B_1$. In the case where $r_0(\Sigma_k)H(\Sigma_k) \to \infty$, an additional rescaling argument (see the discussion after Proposition 2.2 in \cite{stablePMT}) shows that $r_0(\Sigma_k)^{-1}\Sigma_k$ is close to a small coordinate sphere tangent to $S_1 (0)$.
\end{proof}

\begin {proof}[Proof of Proposition \ref{prop:largestablecmcsphere}]
Consider a sequence $\{\Sigma_k\}_{k=1}^\infty$ of connected, closed stable constant mean curvature surfaces $\Sigma_k \subset M$ with $r_0(\Sigma_k)\to \infty$ and $\area(\Sigma_k)\to\infty$. By Proposition \ref{prop:area-rescaling-trivial}, it suffices to only consider the case where $r_0(\Sigma_k)H(\Sigma_k)\to 0$.

We can thus apply Proposition \ref{prop:area-rescaling} to the surfaces $\Sigma_k$. Because the convergence of $\rho_k^{-1} \Sigma_k$ to $S_1(\xi)$ in Proposition \ref{prop:area-rescaling} may not be smooth across the origin, we cannot yet conclude that $\Sigma_k$ has genus $0$. Instead we argue as follows. 

Define $f_k : \Sigma_k \to\R$ by 
\[
f_k(x) = \frac {1}{2} \, |x|^2.
\]
Suppose that $x_k\in\Sigma_k$ is a sequence of critical points of $f_k$ with 
\[
\det D^2_{\Sigma_k} f_k(x_k) \leq 0.
\]
We now consider a different rescaling 
\[
\check \Sigma_k = |x_k|^{-1} \Sigma_k
\]
The same argument as in Proposition \ref{prop:area-rescaling} shows that, after passing to a subsequence, we can pass the immersions $\check\Sigma_k$ with respective base points $\check x_k = x_k/|x_k|$ to a limit to obtain a limiting connected, proper stable constant mean curvature immersion
\[
\varphi_\infty : \Sigma_\infty \to \R^3\setminus\{0\},
\]
with base point $\check x_\infty$ where $\varphi_\infty (\check x_\infty) = \lim_{k \to \infty} \check x_k$. 

When $\varphi_\infty$ has non-zero mean curvature, we can argue as in Proposition \ref{prop:area-rescaling} to show that it is a round sphere. We may then assume that this  sphere passes through the origin (otherwise, the $\check\Sigma_k$ would be smoothly converging to a sphere). When the mean curvature of $\varphi_\infty$ vanishes, we can argue as in \cite[Lemma 3.2]{stablePMT} (along with a log-cutoff argument near the origin) to conclude that it is a flat plane. Either way, it follows that $\check x_\infty$ is a critical point of $f_\infty :\Sigma_\infty \to\R$ where $f_\infty(x) = \tfrac 12 \, |\varphi_\infty(x)|^2$. From this, we see that the immersion $\varphi_\infty$ is tangent to the sphere $S_1(0)$ at the point $\tilde x_\infty$. 

Putting these facts together, we thus see that $\det D^2_{\Sigma_\infty}f_\infty(x_\infty) > 0$. Indeed, the planar case will be a strict local minimum while the spherical case will be a strict local maximum thanks to the observation that the sphere passes through the origin. This contradicts the assumption that $\det D^2_{\Sigma_k}f_k(x_k) \leq 0$. It follows that for sufficiently large $k$, $f_k$ is a Morse function on $\Sigma_k$ with no saddle points. Standard Morse theory shows that $\Sigma_k$ is a sphere. 
\end {proof}


\section{Proof of Theorem \ref{thm:scal-zero-global}} \label{sec:global}

Let $(M, g)$ be a Riemannian $3$-manifold as in the statement of Theorem \ref{thm:scal-zero-global}. Let $\{\Sigma_k\}_{k = 1}^\infty$ be a sequence of connected, closed, embedded stable constant mean curvature surfaces in $(M, g)$ with $\area(\Sigma_k) \to \infty$. Our goal is to prove that $\Sigma_k$ is a leaf of the canonical foliation of $(M, g)$ provided that $k$ is large enough.  

Note that $r_0 (\Sigma_k) \to \infty$ by Theorem 1.10 in \cite{mineffectivePMT} of A.~Carlotto and the authors. We may assume that each $\Sigma_k$ is a sphere by Proposition \ref{prop:largestablecmcsphere}. If $\Sigma_k$ encloses $B_1$ and $k$ is sufficiently large, then it is a leaf of the canonical foliation. This follows from the works of G.~Huisken and S.-T.~Yau  and of J.~Qing and G.~Tian stated in Appendix \ref{sec:cond-results}. We may thus assume that the surfaces in the sequence are outlying spheres. 

Theorem \ref{thm:main} implies that 
\[
\liminf_{k\to\infty} r_0(\Sigma_k) H(\Sigma_k) > 0.
\]
Thus, by Proposition \ref{prop:area-rescaling-trivial} and provided $k$ is sufficiently large, $\Sigma_k$ is smoothly close to a large coordinate sphere, separated from the origin, in the asymptotically flat end. In particular, the spheres $\Sigma_k$ are captured by the Lyapunov--Schmidt analysis developed by S.~Brendle and the authors \cite{Brendle-Eichmair:2014, CE:far-outlying} as surveyed in the introduction to our companion article \cite{CE:far-outlying}. The questions of whether such spheres exist in $(M, g)$ or indeed can be ruled out to exist are reduced by this analysis to the study of fine properties of the scalar curvature of $(M, g)$ in the asymptotically flat end. In particular, since we have assumed that the scalar curvature vanishes outside of a compact set, no such spheres exist; cf.~Section 1 in \cite{CE:far-outlying}. Note that decay of the metric in $C^6$ is assumed in \cite{CE:far-outlying}. This has been weakened to decay in $C^5$ in \cite{EichmairKoerber}. This completes the proof of Theorem \ref{thm:scal-zero-global}. 


\section {Proof of Theorem \ref{thm:main}}\label{sec:proof-thm-main}

Let $(M, g)$ be as in the statement of Theorem \ref{thm:main}. 
We denote by  
\[
\bar g   = \sum_{i=1}^3 dx^i \otimes dx^i  \qquad \text{ and } \qquad g_S = \Big( 1 + \frac{m}{2 |x|}\Big)^4 \sum_{i=1}^3 dx^i \otimes dx^i
\]
the exact Euclidean and the exact Schwarzschild metric, and by    
\[
X = x^i \partial_i
\]
the position vector field in the chart at infinity. We will compare geometric quantities with respect to $g$, the Euclidean metric $\bar g$, and the Schwarzschild metric $g_S$. To set them apart, we use a bar to denote Euclidean quantities and a subscript $S$ for Schwarzschild quantities. 

Let $\Sigma \subset M$ be a connected, closed stable constant mean curvature surface that is outlying. We identify $\Sigma$ with a surface in $M \setminus B_1 \cong \R^3 \setminus B_1(0)$ using the chart at infinity. Note that $\Sigma$ bounds a compact region in $\R^3$ that is disjoint from $B_1(0)$. We use $d \mu$, $h$, and $H$ to denote the area measure, the second fundamental form, and the mean curvature (with respect to the outward pointing unit normal) of $\Sigma$. 

Assume, for a contradiction, that there is a sequence $\{\Sigma_k\}_{k=1}^\infty$ of such surfaces in $(M, g)$ with  
\[
\area(\Sigma_k)\to\infty, \qquad \qquad r_0(\Sigma_k)\to \infty, \qquad \text{ and } \qquad r_0(\Sigma_k)H(\Sigma_k)\to0.
\]

By Proposition \ref{prop:largestablecmcsphere}, we may assume that each $\Sigma_k$ is a sphere. We collect estimates for these surfaces from Appendix \ref{app:curvature-area-growth-CMC}. All the error terms $o (1)$ and $O (1)$ below are with respect to $k \to \infty$. 

For every $\gamma > 2$,
\begin{align} \label{errordecayHY}
r_{0}(\Sigma_k)^{- 2 + \gamma } \int_{\Sigma_k} |x|^{-\gamma} d \mu= O (1)
\end{align}
by Lemma \ref{lem:growth-conditions}. We have that
\begin{equation}\label{eqn:curvaturestimate}
|x| \, |h(x)| = O (1)
\end{equation}
by Lemma \ref{lemm:sff-bds-angst-sph}. By Lemma \ref{lem:qaq}, 
\begin{equation}\label{eq:qag}
\sup_{r > 1} \frac{\area(\Sigma_k\cap B_r)}{r^2} = O (1).
\end{equation}
Trivially,
\begin{equation}\label{eq:eqaq}
\frac 12 \barea(\Sigma_k\cap B_r) \leq \area(\Sigma_k\cap B_r) \leq 2 \barea(\Sigma_k\cap B_r).
\end{equation}
Finally, by Lemma \ref{lemm:HYH2},  
\begin{align}
\area (\Sigma_k) H(\Sigma_k)^2  &= 16\pi + o (1) \label{eq:HYH} \\
\int_{\Sigma_k} |\mathring h|^2 d\mu &= o(1) \label{eq:HY-tfsff}.
\end{align}
\begin{lemma} [Area element comparison]
We have that
\[
d\mu = \Big( 1 + \frac{m}{2 |x|}\Big)^4 \Big( 1 + O(|x|^{-2})\Big) d\bar\mu.
\]
\end{lemma}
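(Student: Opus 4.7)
The plan is a direct linear-algebra expansion of the determinant of the induced metric. Fix local coordinates $(u^{1},u^{2})$ on $\Sigma$ and let $\partial_{\alpha}x = (\partial_{\alpha}x^{1},\partial_{\alpha}x^{2},\partial_{\alpha}x^{3})$ denote the coordinate tangent vectors in the chart at infinity. I would abbreviate $u = 1 + \tfrac{m}{2|x|}$, so that $g = u^{4}\bar g + \sigma = g_{m}+\sigma$ as ambient tensors, and restrict to $\Sigma$ to get
\[
g_{\alpha\beta} \;=\; u^{4}\,\bar g_{\alpha\beta} + \sigma_{\alpha\beta},
\]
where $\bar g_{\alpha\beta}=\delta_{ij}\partial_{\alpha}x^{i}\partial_{\beta}x^{j}$ and $\sigma_{\alpha\beta}=\sigma_{ij}\partial_{\alpha}x^{i}\partial_{\beta}x^{j}$.

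Next, I would factor out the Schwarzschild contribution by writing
\[
g_{\alpha\beta} \;=\; u^{4}\bigl( \bar g_{\alpha\beta} + u^{-4}\sigma_{\alpha\beta} \bigr),
\]
so that $\det(g_{\alpha\beta}) = u^{8}\det(\bar g_{\alpha\beta})\,\det\bigl(\delta^{\alpha}{}_{\beta} + u^{-4}\bar g^{\alpha\gamma}\sigma_{\gamma\beta}\bigr)$. Since $\sigma_{ij}=O(|x|^{-2})$, the $2\times 2$ matrix $u^{-4}\bar g^{\alpha\gamma}\sigma_{\gamma\beta}$ has entries of size $O(|x|^{-2})$, so the standard expansion $\det(I+A) = 1 + \tr A + O(|A|^{2})$ applies to give
\[
\det(g_{\alpha\beta}) \;=\; u^{8}\det(\bar g_{\alpha\beta})\,\Bigl( 1 + u^{-4}\,\bar g^{\alpha\beta}\sigma_{\alpha\beta} + O(|x|^{-4})\Bigr).
\]
Here $\bar g^{\alpha\beta}\sigma_{\alpha\beta}$ is the trace of $\sigma$ along $T\Sigma$; after absorbing the factor $u^{-4}=1+O(|x|^{-1})$ into $\tr_{\Sigma}$ (equivalently, rewriting it as the trace with respect to $g_{m}$ or $g$, which differ from $\bar g$ by conformal rescaling plus a perturbation of size $O(|x|^{-2})$), the combined error from the mismatch is $O(|x|^{-1})\cdot O(|x|^{-2})=O(|x|^{-3})$ times $\sigma$ itself, giving $O(|x|^{-4})$, consistent with the stated error.

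Taking the square root and using $\sqrt{1+t} = 1 + \tfrac{1}{2}t + O(t^{2})$ with $t = O(|x|^{-2})$, I obtain
\[
\sqrt{\det(g_{\alpha\beta})}\;=\; u^{4}\,\sqrt{\det(\bar g_{\alpha\beta})}\,\Bigl( 1 + \tfrac{1}{2}\tr_{\Sigma}\sigma + O(|x|^{-4})\Bigr),
\]
which upon integrating gives the stated identity $d\mu = u^{4}(1+\tfrac{1}{2}\tr_{\Sigma}\sigma + O(|x|^{-4}))\,d\bar\mu$. There is no real obstacle here: the computation is a pointwise expansion and the only subtlety is bookkeeping the factor of $u^{-4}$ against the definition of $\tr_{\Sigma}$ used in the paper's notational conventions (Appendix~\ref{sec:notation}), so that both the linear coefficient appears without a conformal factor and the quadratic remainder is absorbed into the $O(|x|^{-4})$ term.
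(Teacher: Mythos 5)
Your expansion is correct and is precisely the ``straightforward computation'' that the paper defers to Huisken--Ilmanen for, so this is not a different route so much as the route the paper chose not to write out. The one sentence that needs repair is the mismatch accounting at the end: ``$O(|x|^{-1})\cdot O(|x|^{-2}) = O(|x|^{-3})$ times $\sigma$ itself, giving $O(|x|^{-4})$'' does not parse as an estimate, and as written it conflates two different corrections. The clean statement is this: your determinant expansion produces $\tfrac{1}{2}u^{-4}\bar g^{\alpha\beta}\sigma_{\alpha\beta}$, and this equals $\tfrac{1}{2}\tr_{\Sigma}\sigma$ up to $O(|x|^{-4})$ \emph{provided} $\tr_{\Sigma}$ denotes the trace against the metric induced by $g$ or $g_m$ on $\Sigma$ --- which is indeed the paper's convention, as one can read off from the proof of Lemma \ref{lemm:sharp-est-H}, where $\hat g = u^{-4}g$ gives $\hat{\tr}_{\Sigma}\hat\sigma = g^{\alpha\beta}\sigma_{\alpha\beta}$ identically. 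Concretely, $g_m^{\alpha\beta} = u^{-4}\bar g^{\alpha\beta}$ exactly, and $g^{\alpha\beta} - g_m^{\alpha\beta} = O(|x|^{-2})\cdot g_m^{\alpha\beta}$, so passing from the $g_m$-trace to the $g$-trace costs $O(|x|^{-2})\cdot O(|x|^{-2}) = O(|x|^{-4})$ --- this is the absorbable correction. By contrast, if $\tr_{\Sigma}$ had meant the Euclidean trace $\bar g^{\alpha\beta}\sigma_{\alpha\beta}$, the residue $\tfrac{1}{2}(u^{-4}-1)\bar g^{\alpha\beta}\sigma_{\alpha\beta} = O(|x|^{-3})$ would genuinely exceed the stated error; so identify the trace correctly rather than suggesting the $O(|x|^{-3})$ term can be absorbed.
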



\begin{lemma} [Mean curvature comparison]
We have that 
\begin{align} \label{comparisonmc-coarse}
H &= \bar H + O(|x|^{-2}) 
\end{align}
and 
\begin {align}
\Big( 1 + \frac{m}{2|x|}\Big)^{2} H &= \bar H - \Big( 1 + \frac{m}{2|x|}\Big)^{-1} \frac{2m}{|x|^{3}} \, \bar g(X,\bar\nu) + O(|x|^{-3}). \label{comparisonmc}
\end {align}
\begin{proof}
A standard computation as in \cite[p.~418]{Huisken-Ilmanen:2001} gives that 
\[
H = \bar H  + O(|\partial \sigma|) + O(|\bar h| |\sigma|).
\]
The first estimate now follows from \eqref{eqn:curvaturestimate} and Lemma \ref{lemm:h-barh-diff-std-est}. For the second estimate, consider the metric
\[
\hat g = \bar g  + \hat \sigma \qquad \text{ where } \qquad  \sigma =   \Big( 1 + \frac{m}{2|x|}\Big)^{4} \hat \sigma
\]
that is conformally related to $g$. Note that 
\[
\Big( 1 + \frac{m}{2|x|}\Big)^{2} H =  \hat H - \Big( 1 + \frac{m}{2|x|}\Big)^{-1} \frac{2m}{|x|^{3}} \, \hat g (X, \hat \nu)
\]
by the formula (see e.g.~\cite [Lemma 1.4]{Huisken-Yau:1996}) for the change of mean curvature under conformal changes of the metric. The same computation as above gives
\[
\hat H = \bar H  + O(|\partial \hat \sigma|) + O(|\bar h| |\hat \sigma| ).
\]
The asserted estimate now follows in conjunction with \eqref{eqn:curvaturestimate} and Lemma \ref{lemm:h-barh-diff-std-est}.
\end {proof}
\end{lemma}


\begin{lemma}
Let $\delta \in (0,1)$. We have that 
\begin{align} \label{boundsffaux}
(1-\delta) \int_{\Sigma_k} |\mathring{\bar h}|^{2}_{\bar g} d\bar \mu \leq \int_{\Sigma_k} |\mathring h|^{2}d\mu  + O(r_{0}(\Sigma_k)^{-4}).
\end{align}
\end{lemma}
\begin{proof}
First, note that 
\[
\int_{\Sigma_k} |\mathring {\bar h}|^{2}_{\bar g}d\bar \mu = \int_{\Sigma_k} |\mathring { h}_S|_{S}^2 d\mu_S
\]
by conformal invariance. Moreover, 
\[
\int_{\Sigma_k} |\mathring h|^{2}d\mu_{S} = (1+o(1))\int_{\Sigma_k} |\mathring h|^{2}d\mu.
\]
The curvature bound \eqref{eqn:curvaturestimate} and a standard computation as in \cite[p.~418]{Huisken-Ilmanen:2001} gives
\[
|\mathring h |  = |\mathring h_{S} |_{S}  + O(|x|^{-3}).
\]
Using Young's inequality, we obtain
\[
 |\mathring h_{S} |_{S}^2  \leq |\mathring h |^2 + \frac  \delta 4  |\mathring h_{S} |_{S}^{2} + O(|x|^{-6})
\]
where $\delta^{-1}$ has been absorbed into the error term. Using these estimates together with \eqref{errordecayHY}, we obtain
\[
\int_{\Sigma_k} |\mathring {\bar h}|^{2}_{\bar g}d\bar \mu =  \int_{\Sigma_k} |\mathring h_{S}|_{S}^2 d \mu_S  \leq  (1 + o(1)) \int_{\Sigma_k} |\mathring h|^2 d \mu  + \frac  \delta 2 \int_{\Sigma_k} |\mathring{\bar h}|_{\bar g}^{2}d\bar\mu + O(r_{0}(\Sigma_k)^{-4}).
\]
This proves the assertion.
\end{proof}

By a result of C.~De Lellis and S.~M\"uller \cite{DeLellis-Muller:2005}, there is a constant $\Gamma > 1$ such that 
\begin{equation}\label{eq:DM-arb-const}
\int_{\Sigma_k} |\bar H-2/\lambda_k|^{2} d\bar\mu \leq 2 \, \Gamma \int_{\Sigma_k} |\mathring{\bar{h}}|^{2}_{\bar g}d\bar\mu
\end{equation}
holds for appropriate choice of $\lambda_k > 0$.

\begin{lemma}
We have
\begin{align*} 
\lambda_k \, H(\Sigma_k) = 2 + o (1).
\end{align*}
\begin{proof}
Consider the rescaled surfaces $\rho_k^{-1} \Sigma_k$ as in Proposition \ref{prop:area-rescaling}. After passing to a subsequence, these surfaces converge to a coordinate sphere $S_1(\xi)$ with $|\xi| = 1$, locally smoothly in $\R^3 \setminus \{0\}$. In particular, their Euclidean mean curvature converges to $2$ away from the origin. Now, the right-hand side of \eqref{eq:DM-arb-const} is $o(1)$, both sides of \eqref{eq:DM-arb-const} are scaling invariant, and the integrand on the left is non-negative. The asserted estimate follows from these facts.
\end{proof}
\end{lemma}

We use \eqref{eq:DM-arb-const} to derive an estimate \eqref{finalHawking} for the Hawking mass of $\Sigma$ that is positioned against the Christodoulou--Yau estimate in Theorem \ref{theo:CY-sphere-intro}.\\

We have
\begin{equation*} 
 \Big( 1 + \frac{m}{2|x|}\Big)^{4} H^{2} = \Big( \bar H - \Big( 1 + \frac{m}{2|x|}\Big)^{-1} \frac{2m}{|x|^{3}} \, \bar g(X,\bar\nu) \Big)^{2} + O(H |x|^{-3}) + O(|x|^{-6})
\end{equation*}
by \eqref{comparisonmc}. Hence,
\begin{align*}
\int_{\Sigma_k} H^{2} d\mu & = \int_{\Sigma_k} \Big( \bar H - \Big( 1 + \frac{m}{2|x|}\Big)^{-1} \frac{2m}{|x|^{3}} \,  \bar g(X,\bar\nu) \Big)^{2}  \Big( 1 + \frac{m}{2|x|}\Big)^{-4} d\mu\\
& \qquad +  O \int_{\Sigma_k} (H|x|^{-3} + |x|^{-6}) d\mu\\
& = \int_{\Sigma_k} \Big( \bar H - \Big( 1 + \frac{m}{2|x|}\Big)^{-1} \frac{2m}{|x|^{3}} \,  \bar g(X,\bar\nu) \Big)^{2} \Big( 1 + O(|x|^{-2})\Big) d\bar\mu\\
& \qquad + O \int_{\Sigma_k} (H|x|^{-3} + |x|^{-6})d\mu\\
& =  \int_{\Sigma_k} \Big( \bar H - \Big( 1 + \frac{m}{2|x|}\Big)^{-1} \frac{2m}{|x|^{3}} \,  \bar g(X,\bar\nu) \Big)^{2} d\bar\mu\\
& \qquad + O \int_{\Sigma_k} (H^2 |x|^{-2} + H|x|^{-3} + |x|^{-6})d\mu.
\end{align*}
Note that we have \textsl{precisely} isolated the Schwarzschild contribution. \\

We now expand the Schwarzschild term. 
\begin{align*}
 \int_{\Sigma_k} \Big( \bar H - \Big( 1 + \frac{m}{2|x|}\Big)^{-1}\frac{2m}{|x|^{3}} \, \bar g(X,\bar\nu)\Big)^{2}d\bar\mu & = \int_{\Sigma_k} \bar H^{2}d \bar\mu \\
& \qquad-  \int_{\Sigma_k} \bar H \Big( 1 + \frac{m}{2|x|}\Big)^{-1} \frac{4m}{|x|^{3}} \, \bar g(X,\bar\nu)d\bar\mu\\
& \qquad+ \int_{\Sigma_k} \Big(1+\frac{m}{2|x|}\Big)^{-2} \frac{4m^{2}}{|x|^{6}} \, \bar g(X,\bar\nu)^{2}d\bar\mu\\
& = 16\pi +  2 \int_{\Sigma_k} |\mathring{\bar h}|^{2}_{\bar g}d\bar\mu\\
& \qquad -  \int_{\Sigma_k} \bar H \frac{4m}{|x|^{3}} \, \bar g(X,\bar\nu)d\bar\mu\\
& \qquad+  \int_{\Sigma_k} \bar H \Big( 1 - \Big( 1 + \frac{m}{2|x|}\Big)^{-1} \Big) \frac{4m}{|x|^{3}}\, \bar g(X,\bar\nu)d\bar\mu\\
& \qquad+ \int_{\Sigma_k} \Big(1+\frac{m}{2|x|}\Big)^{-2} \frac{4m^{2}}{|x|^{6}} \,  \bar g(X,\bar\nu)^{2}d\bar\mu
\intertext{Observe that  
\begin{equation} \label{eq:outlying-flux-vanishes}
\int_{\Sigma_k} \frac { \bar g (X, \bar \nu)}{|x|^3} d \bar \mu = 0
\end{equation}
since $\Sigma$ \textsl{does not} enclose the origin. We choose $\tau$ with $2 < \tau < 8/3$ and continue to estimate.}
& = 16\pi + 2 \int_{\Sigma_k} |\mathring{\bar h}|^{2}_{\bar g}d\bar\mu\\
& \qquad-  \int_{\Sigma_k} \Big(\bar H -\frac{2}{\lambda_k}\Big) \frac{4m}{|x|^{3}} \, \bar g(X,\bar\nu)d\bar\mu\\
& \qquad+  \int_{\Sigma_k} \bar H  \frac{4m^{2}}{|x|^{3}(2|x|+m)} \,  \bar g(X,\bar\nu)d\bar\mu\\
& \qquad+ \int_{\Sigma_k} \Big(1+\frac{m}{2|x|}\Big)^{-2} \frac{4m^{2}}{|x|^{6}} \,  \bar g(X,\bar\nu)^{2}d\bar\mu\\
& \geq  16\pi + 2 \int_{\Sigma_k} |\mathring{\bar h}|^{2}_{\bar g}d\bar\mu\\
& \qquad- \frac{\tau}{2}  \int_{\Sigma_k} \big|\bar H -\frac{2}{\lambda_k}\big|^{2} d\bar\mu\\
& \qquad-  \frac{2}{\tau} \int_{\Sigma_k} \frac{4m^{2}}{|x|^{6}} \,  \bar g(X,\bar\nu)^{2}d\bar\mu\\
& \qquad+  \int_{\Sigma_k} \bar H  \frac{4m^{2}}{|x|^{3}(2|x|+m)} \, \bar g(X,\bar\nu)d\bar\mu\\
& \qquad+ \int_{\Sigma_k} \Big(1+\frac{m}{2|x|}\Big)^{-2} \frac{4m^{2}}{|x|^{6}} \,  \bar g(X,\bar\nu)^{2}d\bar\mu.
\intertext{We use the estimate of De Lellis--M\"uller \eqref{eq:DM-arb-const} on the second line. We also combine the third with the last line.}
& \geq  16\pi + (2- \tau\Gamma ) \int_{\Sigma_k} |\mathring{\bar h}|_{\bar g}^{2}d\bar\mu\\
&\qquad +  \int_{\Sigma_k} \bar H  \frac{4m^{2}}{|x|^{3}(2|x|+m)} \, \bar g(X,\bar\nu)d\bar\mu\\
& \qquad+  4 m^2 \Big( 1 - \frac{2}{\tau}\Big) \int_{\Sigma_k}   \frac{ \bar g(X,\bar\nu)^{2} }{|x|^{6}} d\bar\mu\\
& \qquad+ O \int_{\Sigma_k} |x|^{-5} d\bar\mu
\intertext{We estimate the second line using \eqref{comparisonmc-coarse}.}
& =  16\pi + (2- \tau \Gamma) \int_{\Sigma_k} |\mathring{\bar h}|_{\bar g}^{2}d\bar\mu\\
& \qquad+  4 m^2 \Big( 1 - \frac{2}{\tau}\Big) \int_{\Sigma_k}   \frac{ \bar g(X,\bar\nu)^{2} }{|x|^{6}} d\bar\mu\\
& \qquad + O \int_{\Sigma_k} (H|x|^{-3} + |x|^{-5})d\bar\mu.
\end{align*}

In conclusion, we obtain 
\begin{align} \label{finalHawking}
\int_{\Sigma_k} H^{2}d\mu & \geq  16\pi + (2-  \tau\Gamma) \int_{\Sigma_k} |\mathring{\bar h}|_{\bar g}^{2}d\bar\mu\\ \nonumber
&  \qquad + 4 m^2 \Big(1 - \frac2\tau \Big) \int_{\Sigma_k} \frac{\bar g(X,\bar\nu)^{2}}{|x|^{6}} d\bar\mu\\ \nonumber
& \qquad + O\int_{\Sigma_k} (H^2 |x|^{-2} + H|x|^{-3} + |x|^{-5}) d\mu.
\end{align}

Combining Theorem \ref{theo:CY-sphere-intro} with \eqref{boundsffaux} and \eqref{finalHawking} and then estimating the resulting error terms using \eqref{errordecayHY} and \eqref{eqn:curvaturestimate},  we obtain
\begin{align}
& \left( \frac {2}{3} (1-\delta) + 2 - \tau \Gamma \right)      \int_{\Sigma_k} |\mathring{\bar h}|^{2}_{\bar g} d\bar \mu +  4 \, m^2 \Big(1 - \frac2\tau \Big)  \int_{\Sigma_k} \frac{\bar g(X,\bar\nu)^{2}}{|x|^{6}} d\bar\mu \label{eq:BIG-inequality-with-Gamma}\\ 
& \qquad  \leq O(r_{0}(\Sigma_k)^{-4})  
  + O  \int_{\Sigma_k} (H^2 |x|^{-2} + H|x|^{-3} + |x|^{-5}) d\mu \nonumber\\
& \qquad \leq O(r_{0}(\Sigma_k)^{-3}) + H(\Sigma_k)\,  O(r_{0}(\Sigma_k)^{-1}). \nonumber
\end{align}

We emphasize that the second term in \eqref{eq:BIG-inequality-with-Gamma} is owed to the Schwarzschild background. We now study this term more closely.

\begin{lemma} \label{lem:r-2-lemm}
We have that 
\begin{align*}
\liminf_{k \to \infty} \left( r_{0}(\Sigma_k)^{2} \int_{\Sigma_k} \frac { \bar g(X,\bar\nu)^{2}}{|x|^6}d\bar\mu \right)> 0.
\end{align*}
\begin{proof} 
The rescaled surfaces $r_0(\Sigma_k)^{-1}\Sigma_k$ are contained in $\{x \in \R^3 : |x| \geq 1\}$ and tangent to $S_1(0)$. By \eqref{eqn:curvaturestimate} and Lemma \ref{lemm:h-barh-diff-std-est}, \eqref{eq:qag}, and \eqref{eq:eqaq}, area and as well as extrinsic curvature of these surfaces with respect to the Euclidean background metric are locally uniformly bounded. Their mean curvature tends to zero. Using also \eqref{eq:HY-tfsff}, we see that a subsequence (with appropriate choice of basepoints) converges in the sense of pointed immersions to an affine plane $\Pi$ tangent to $S_1(0)$. Observe that
\[
c = \int_\Pi \frac{\bar g(X,\bar\nu)^2}{|x|^6} d\bar\mu > 0
\]
is independent of the particular limiting plane $\Pi$. To conclude the argument, note that the quantity in the statement of the lemma is scaling invariant and the integrand in the statement of the lemma is non-negative. This completes the proof.
\end{proof}
\end{lemma}

We can now prove a preliminary version of Theorem \ref{thm:main}. The proof uses the crucial bound  
\begin{align} \label{aux:crucialerror}
\int_{\Sigma_k} |\mathring h|^2 d \mu \leq \area (\Sigma_k)^{-1/2} \, O(1) = H(\Sigma_k)\, O(1)
\end{align}
proven in Proposition \ref{prop:IMCF-CY}.

\begin{lemma} \label{lemm:prelim-main-thm}
We have
\begin{align*}
r_{0}(\Sigma_k)^{-2} \leq  H(\Sigma_k)\, O(1).
\end{align*}

\begin{proof}
We choose $\tau = 3$ in \eqref{eq:BIG-inequality-with-Gamma}. Rearranging \eqref{eq:BIG-inequality-with-Gamma} and using Lemma \ref{lem:r-2-lemm}, we obtain\footnote{We will show below that $\Sigma_k$ satisfies the De Lellis--M\"uller estimate \eqref{eq:DM-arb-const} with constant $\Gamma = 1 + o (1)$. We may then choose $\tau$ sufficiently close to $2$ so as to arrange for the coefficient of $\int_{\Sigma_k} |\mathring{\bar{h}}|^2_{\bar g}d\bar\mu$ in \eqref{eq:BIG-inequality-with-Gamma} to be positive. At this point of the argument, however, we have to treat this term as error.} 
\[
r_{0}(\Sigma_k)^{-2} \leq O(r_{0}(\Sigma_k)^{-3}) +  O(H(\Sigma_k) r_{0}(\Sigma_k)^{-1}) + O(1) \int_{\Sigma_k} |\mathring{\bar{h}}|^{2}_{\bar g}d\bar\mu.
\]
Combined with \eqref{boundsffaux} and \eqref{aux:crucialerror}, this proves the assertion.
\end{proof}
\end{lemma}

Note that Lemma \ref{lemm:prelim-main-thm} implies that
\[
|x|^{-2} \leq O(H).
\]
Using also \eqref{comparisonmc-coarse}, we obtain the estimate 
\begin{align*} 
\bar H = O(H)
\end{align*}
for the Euclidean mean curvature. From this and \eqref{eq:HYH}, we see that the rescaled surfaces $\rho_k^{-1}  \Sigma_k$ have bounded \textsl{Euclidean} mean curvature, where $\rho_k = 2 / H (\Sigma_k)$. Together with the bounds on the area growth from \eqref{eq:qag} and \eqref{eq:eqaq}, this estimate enables us to use Allard's theorem to show that the geometric convergence proven in Proposition \ref{prop:area-rescaling} also holds across the origin. 

\begin{corollary} \label{cor:C1alpha}
Let $\alpha \in (0, 1)$. Possibly after passing to a subsequence, the rescaled surfaces $\rho_k^{-1} \Sigma_k$ converge in $C^{1,\alpha}$ to a coordinate sphere $S_{1}(\xi)$ in $\R^3$ where $\xi \in \R^3$ with $|\xi| = 1$. 
\begin {proof}
Proposition \ref{prop:area-rescaling} and the assumption that $r_0(\Sigma_k)H(\Sigma_k)\to 0$ imply subsequential convergence (smooth with multiplicity one) away from the origin. The $C^{1,\alpha}$ convergence across the origin is now a straightforward consequence of Allard's theorem (as stated in \cite[Theorem 24.2]{Simon:GMT}). 
\end {proof}
\end{corollary}

\begin{lemma} \label {lemm:sharp-DM}
We have that 
\begin{align*} 
\inf_{\lambda > 0} \int_{\Sigma_k} \big|\bar H - 2/\lambda\big|^{2} d\bar\mu \leq 2(1 + o (1)) \int_{\Sigma_k} |\mathring{\bar h}|^{2}_{\bar g}d\bar\mu. 
\end{align*}

\begin{proof}
It follows from \eqref{errordecayHY}, \eqref{eqn:curvaturestimate}, and \eqref{comparisonmc-coarse} that 
\[
\int_{\Sigma_k} \bar H^2 d \bar \mu = \int_{\Sigma_k} H^2 d \mu + O (r_0(\Sigma_k)^{-1}) = O (1).
\]
Moreover, by \eqref{eq:HY-tfsff} and \eqref{boundsffaux}, 
\[
\int_{\Sigma_k} |\mathring {\bar h}|_{\bar g}^2 d \bar \mu = o (1).
\]

Combining the Minkowski inequality proved in Proposition \ref{prop:improvedminkowsi} with Corollary \ref{cor:C1alpha}, we obtain 
\[
16 \pi \leq  (2 / \lambda_k)^2 \, \barea (\Sigma_k) + o (1) \int_{\Sigma_k} | \mathring {\bar h}|^2_{\bar g} d \bar \mu.
\]
Here, $\lambda_k >0$ satisfies
\[
\frac{2}{\lambda_k} = \frac{1}{\barea(\Sigma_k)} \int_{\Sigma_k} \bar H d \bar \mu > 0.
\]
From this, the Gauss equation, and the Gauss-Bonnet formula, we find
\begin{align*}
\int_{\Sigma_k} |\bar H-2/\lambda_k|^{2} d\bar\mu &= \int_{\Sigma_k} \bar H^{2} d\bar\mu - (2/\lambda_k)^{2}\barea(\Sigma_k) \\ &= 16 \pi + 2  \int_{\Sigma_k} | \mathring {\bar {h}}|^2_{\bar g}  d\bar\mu - (2/\lambda_k)^{2}{\barea}(\Sigma_k)  \\ &\leq 2 (1+o(1)) \int_{\Sigma_k} |\mathring{\bar h}|^{2}_{\bar g} d\bar\mu.
\end{align*}
This completes the proof. 
\end{proof}
\end{lemma}

Note that Lemma \ref{lemm:sharp-DM} just says that \eqref{eq:BIG-inequality-with-Gamma} holds with $\Gamma = 1 + o (1)$. Thus,
\begin{align*}
 & \Big( \frac {2}{3}(1-\delta) + 2 - \tau + o(1)\Big)  \int_{\Sigma_k} |\mathring{\bar h}|^{2}_{\bar g} d\bar \mu +  4 \, m^2 \Big(1 - \frac2\tau \Big)  \int_{\Sigma_k} \frac{\bar g(X,\bar\nu)^{2}}{|x|^{6}} d\bar\mu \\ 
 &\qquad \qquad\leq O(r_{0}(\Sigma_k)^{-3})  + O(H(\Sigma_k) r_0(\Sigma_k)^{-1}).
\end{align*}
We choose $\tau > 2$ and $ \delta > 0$ so that\footnote{Here it is crucial that we have kept the term on the left-hand side of the Christodoulou--Yau estimate \eqref{eq:CY-intro}!}
\[
\frac{2}{3} (1-\delta) + 2 - \tau  > 0.
\] 
It follows that
\begin{align*}
\int_{\Sigma} \frac{\bar g(X,\bar\nu)^{2}}{|x|^{6}} d\bar\mu & \leq O(r_{0}(\Sigma_k)^{-3})  + O(H(\Sigma_k)r_0(\Sigma_k)^{-1}).
\end{align*}
Together with Lemma \ref{lem:r-2-lemm}, this gives
\[
r_{0}(\Sigma_k)^{-2} \leq O(r_{0}(\Sigma_k)^{-3}) + O(H(\Sigma_k)r_{0}^{-1}).
\]
This estimate is not compatible with the assumption $r_0(\Sigma_k)H(\Sigma_k)\to 0$. This contradiction completes the proof of Theorem \ref{thm:main}. 

\appendix 


\section {The canonical foliation after Huisken--Yau and Qing--Tian} \label{sec:cond-results} 

Here we review foundational results by G.~Huisken and S.-T.~Yau \cite{Huisken-Yau:1996} and by J.~Qing and G.~Tian \cite{Qing-Tian:2007} on the canonical foliation. We apply these results in the proof of Theorem \ref{thm:scal-zero-global}.

\begin {theorem} [G.~Huisken and S.-T.~Yau \cite{Huisken-Yau:1996}] \label{Huisken-Yau:1996} 
Let $(M, g)$ be a complete Riemannian $3$-manifold that is $C^4$-asymptotic to Schwarzschild with mass $m > 0$. There is a family of distinguished embedded stable constant mean curvature spheres $\{ \Sigma_H \}_{0 < H < H_0}$ that foliate the complement of a compact subset $C \subset M$. For every $s \in (1/2, 1]$, there is $0 < H_0(s) < H_0$ with the following property. Let $0 < H < H_0(s)$. Then $\Sigma_H$ is the unique stable constant mean curvature sphere of mean curvature $H$ in $(M, g)$ that encloses the centered coordinate ball $B_{H^{-s}}$. 
\end {theorem}

In fact, the surface $\Sigma_H$ is constructed as a perturbation of the centered coordinate sphere $S_{2/H}$. We mention that R.~Ye \cite{Ye:AF-CMC} has given an alternative construction of the foliation. It has been shown by J.~Metzger \cite{Metzger:2007} that Theorem \ref{Huisken-Yau:1996} holds when $(M, g)$ is $C^2$-asymptotic to Schwarzschild with positive mass. 

We refer to $\{\Sigma_H\}_{0 < H < H_0}$ as the \textsl{canonical foliation} of the end of $(M, g)$.

The uniqueness result for the leaves of the canonical foliation has been strengthened by J.~Qing and G.~Tian \cite{Qing-Tian:2007}.

\begin {theorem} [J.~Qing and G.~Tian \cite{Qing-Tian:2007}] \label{Qing-Tian:2007}
Assumptions as in Theorem \ref{Huisken-Yau:1996}. Upon shrinking $H_0>0$ and enlarging $C$ accordingly, if necessary, the following uniqueness result holds. Let $H \in (0, H_0)$. Then $\Sigma_H$ is the unique stable constant mean curvature sphere of mean curvature $H$ that is embedded in $(M, g)$ and which encloses $C$. 
\end {theorem}

\begin {remark}
In Appendix \ref{app:QT}, we provide an alternative proof of Theorem \ref{Qing-Tian:2007} if $(M, g)$ is assumed to have non-negative scalar curvature. This proof is based on the method we develop in Section \ref{sec:proof-thm-main}.
\end {remark}

\begin {remark} 
We  prove in Proposition \ref{prop:largestablecmcsphere} that connected, closed, embedded stable constant mean curvature surfaces in $(M, g)$ are necessarily spheres, provided $r_0(\Sigma) > 1$ is sufficiently large. This extends the scope of the uniqueness statement in Theorems \ref{Huisken-Yau:1996} and \ref{Qing-Tian:2007} from spheres to closed surfaces of any genus. 
\end {remark}


\section{Curvature and area growth estimates for stable CMC surfaces} \label{app:curvature-area-growth-CMC}

Let $g$ be a metric on $\R^3$ that is $C^2$-asymptotically flat of rate $q > 1/2$ as in \eqref{def:AF}. In this section, we recall several estimates for sequences $\{\Sigma_k\}_{k = 1}^\infty$ of connected, closed, embedded stable constant mean curvature surfaces in $(\R^3, g)$ with $r_0 (\Sigma_k) \to \infty$. Some of these estimates were stated and proven in the literature under stronger asymptotic conditions -- either $q=1$ in \eqref{def:AF} or $g$ as in \eqref{eqn:stronglyAF} with $k = 2$. The proofs carry over to the present setting, sometimes with minor modifications which we indicate below. We use a bar or sometimes a subscript when quantities are computed with respect to the Euclidean background metric $\bar g$. The error terms $o(1)$ and $O (1)$ below all hold uniformly as $k \to \infty$. 

\begin{lemma}[{cf.~\cite[p.~418]{Huisken-Ilmanen:2001}}] \label{lemm:h-barh-diff-std-est}
We have that
\[
|x| \, |(h - \bar h)(x)| = O(|x|^{1-q}|h (x)|) + O(|x|^{-q}).
\]
\end{lemma}

\begin{lemma}[{\cite[Lemma 5.2]{Huisken-Yau:1996}}] \label{lem:growth-conditions}
Let $\gamma>2$. We have that  
\[
r_0(\Sigma_k)^{-2 + \gamma} \int_{\Sigma_k} |x|^{-\gamma} d\mu = O(1).
\]
\end{lemma}

\begin {lemma}[{\cite[Lemma 2.5]{stablePMT}}] \label{lemm:CY-all-genus}
We have that
\begin{align*} 
\area (\Sigma_k) H(\Sigma_k)^2  \leq \frac{64 \pi}{3} + o (1).
\end{align*} 
\end {lemma}


\begin {lemma}[{cf.~\cite[Proposition 2.3]{stablePMT}}] \label{lemm:sff-bds-angst-sph}
Assume that $r_0(\Sigma_k)H(\Sigma_k) = O (1)$. We have that 
\[
|x|\, |h(x)| = O (1).
\]
\begin {proof}
The assumption $r_0(\Sigma_k)H(\Sigma_k) = O (1)$ can be used to rule out spherical limits occurring in the ``latter alternative'' in the proof of \cite[Proposition 2.3]{stablePMT}. 
\end{proof}
\end {lemma} 


\begin{lemma}[{cf.~\cite[Corollary 2.6]{stablePMT}}]\label{lem:qaq} Assume that $r_0(\Sigma_k)H(\Sigma_k) = O(1)$. We have that 
\begin{align*}
\sup_{r > 1} \frac{\area(\Sigma_k \cap B_r)}{r^2}  = O (1).
\end{align*}
\end {lemma}
\begin {proof} 
Estimate \cite[(1.3)]{Simon:willmore} implies that
\[
\sup_{r > 1} \frac{\barea(\Sigma_k \cap B_r)}{r^2} = O (1) \,  \int_{\Sigma_k} \bar H^2 d\bar\mu.
\]
Using the previous estimates in this appendix exactly as in the proof of \cite[Lemma B.3]{stablePMT}, we obtain
\[
\int_{\Sigma_k} \bar H^2 d\bar \mu = O (1) \, \area(\Sigma_k)H(\Sigma_k)^2 = O (1). 
\]
The assertion follows since $\area(\Sigma_k\cap B_r)$ is comparable to $\barea(\Sigma_k\cap B_r)$ for large $k$. 
\end {proof}

\begin {lemma} [{cf.~\cite{Huisken-Yau:1996}}] \label{lemm:HYH2} 
Assume that the surfaces $\Sigma_k$ have genus zero. We have that  
\begin{align*}
\area (\Sigma_k) H(\Sigma_k)^2  = 16\pi + o (1) \qquad \textrm{and} \qquad \int_{\Sigma_k} |\mathring h|^2 d\mu = o(1).
\end{align*} 
\begin {proof} This is contained in the proof of Proposition 5.3 in \cite{Huisken-Yau:1996}.
\end {proof}
\end {lemma}

\section{On stable CMC surfaces separating the compact part from infinity}\label{app:QT}

Here we explain how to modify the proof of Theorem \ref{thm:main} to obtain an alternative argument for the key technical step (due to J.~Qing and G.~Tian) in the proof of Theorem \ref{Qing-Tian:2007}, though under the \textsl{additional assumption} that the scalar curvature of $(M, g)$ is non-negative. Let $(M,g)$ be as in the statement of Theorem \ref{thm:main}.

\begin{proof}
Let $\{\Sigma_k\}_{k=1}^\infty$ be a sequence of stable constant mean curvature spheres $\Sigma_k\subset M$ each enclosing $B_1$ and such that $\area(\Sigma_k)\to\infty$ and $r_0(\Sigma_k)\to\infty$. Assume, for a contradiction, that $r_0(\Sigma_k)H(\Sigma_k) \to 0$. We follow the proof of Theorem \ref{thm:main} in Section \ref{sec:proof-thm-main} up until the flux integral \eqref{eq:outlying-flux-vanishes}, which needs to be replaced by 
\[
\int_{\Sigma_k} \frac { \bar g (X, \bar \nu)}{|x|^3} d \bar \mu = 4\pi
\]
since $\Sigma_k$ endloses $B_1$. Following along, instead of estimate \eqref{eq:BIG-inequality-with-Gamma} we obtain
\begin{align*}
& \left( \frac {2}{3}(1-\delta)+ 2 - \tau \Gamma \right)      \int_{\Sigma_k} |\mathring{\bar h}|^{2}_{\bar g} d\bar \mu +  4 \, m^2 \Big(1 - \frac2\tau \Big)  \int_{\Sigma_k} \frac{\bar g(X,\bar\nu)^{2}}{|x|^{6}} d\bar\mu - \frac{8\pi}{\lambda_k} \\ 
& \qquad \leq O(r_{0}(\Sigma_k)^{-3}) + H(\Sigma_k) \, O(r_{0}(\Sigma_k)^{-1}). 
\end{align*}
Using Lemma \ref{lem:r-2-lemm} as before, we arrive at 
\[
r_{0}(\Sigma_k)^{-2} \leq  O(r_{0}(\Sigma_k)^{-3}) + H(\Sigma_k)\, O(r_{0}(\Sigma_k)^{-1}) + \frac{8\pi}{\lambda_k} + O(1)  \int_{\Sigma_k} |\mathring{\bar h}|^{2}_{\bar g} d\bar \mu.
\]
The proof of Lemma \ref{lemm:prelim-main-thm} and Corollary \ref{cor:C1alpha} go through without change since $2/\lambda_k \sim H(\Sigma_k) = 2 / \rho_k$. We may thus assume that $\tilde \Sigma_k = \rho_k^{-1} \Sigma_k$ is a $C^{1, \alpha}$ graph of size $o(1)$ over a sphere $S_{1}(\xi)$  with $|\xi| =1$. 
We may argue as in \cite[(5.13)]{Huisken-Yau:1996} or \cite[Lemma 5.1]{Qing-Tian:2007} that
\begin{equation}\label{eq:HY-detect-dat-mass}
\int_{\tilde\Sigma_{k}} \frac{\bar g(\xi,\bar\nu)}{|x|} d\bar\mu + \int_{\tilde\Sigma_{k}} \frac{\bar g(X,\bar \nu)\bar g(\xi,\bar \nu)}{|x|^{3}} d\bar\mu = o(1).
\end{equation}
As in \cite[(5.13)]{Huisken-Yau:1996}, the first integral satisfies\footnote{Note we have chosen to use $\xi$ rather than $-\xi$ here, which is why the resulting integral is negative}
\[
\int_{\tilde\Sigma_{k}} \frac{\bar g(\xi,\bar\nu)}{|x|} d\bar\mu = \int_{S_{1}(\xi)} \frac{\bar g(\xi,\bar\nu)}{|x|} d\bar\mu + o(1) = - \frac{4\pi}{3} + o(1).
\]
To study the second integral in \eqref{eq:HY-detect-dat-mass}, we perturb $\xi$ slightly to $\xi_{k}\in\mathbb{R}^{3}$ so that $\tilde\Sigma_{k}$ and $S_{1}(\xi_{k})$ meet tangentially at the point $p_k \in \tilde\Sigma_{k}$ closest to the origin. (The transversality argument in the proof of Proposition \ref{prop:largestablecmcsphere} shows that these closest points are unique.) It follows that 
\[
|\bar \nu - (X-\xi_{k})| \leq o(1) |x-p_{k}|^{\alpha}.
\]

We estimate 
\begin{align*}
\int_{\tilde\Sigma_{k}} \frac{\bar g(X,\bar \nu)\bar g(\xi,\bar \nu)}{|x|^{3}} d\bar\mu & = \int_{\tilde \Sigma_{k}}\frac{\bar g(X,\bar \nu)\bar g(\xi,X-\xi_{k})}{|x|^{3}} d\bar\mu + \int_{\tilde\Sigma_{k}} \frac{\bar g(X,\bar \nu)\bar g(\xi,\bar \nu - (X-\xi_{k}))}{|x|^{3}} d\bar\mu\\
& =  \int_{\tilde \Sigma_{k}}\frac{\bar g(X,\bar \nu)\bar g(\xi,X)}{|x|^{3}} d\bar\mu -\bar g(\xi,\xi_{k}) \int_{\tilde \Sigma_{k}}\frac{\bar g(X,\bar \nu)}{|x|^{3}} d\bar\mu \\
& \qquad + \int_{\tilde\Sigma_{k}} \frac{\bar g(X,\bar \nu)\bar g(\xi,\bar \nu - (X-\xi_{k}))}{|x|^{3}} d\bar\mu\\
& =  \int_{S_{1}(\xi)}\frac{\bar g(X,\bar \nu)\bar g(\xi,X)}{|x|^{3}} d\bar\mu +o(1) +  (1+o(1)) \int_{\tilde \Sigma_{k}}\frac{\bar g(X,\bar \nu)}{|x|^{3}} d\bar\mu \\
& \qquad + \int_{\tilde\Sigma_{k}} \frac{\bar g(X,\bar \nu)\bar g(\xi,\bar \nu - (X-\xi_{k}))}{|x|^{3}} d\bar\mu\\
& = \frac{4\pi}{3} - 4\pi + o(1) +  \int_{\tilde\Sigma_{k}} \frac{\bar g(X,\bar \nu)\bar g(\xi,\bar \nu - (X-\xi_{k}))}{|x|^{3}} d\bar\mu. 
\end{align*}
In the second to last step, we use the uniform integrability of the first integrand. The last step follows from explicit computation of the first term and an application of the divergence theorem for the second term. Note that 
\[
\int_{\tilde\Sigma_{k}} \frac{\bar g(X,\bar \nu)\bar g(\xi,\bar \nu - (X-\xi_{k}))}{|x|^{3}} d\bar\mu = o(1) \int_{\tilde\Sigma_{k}} \frac{|x-p_{k}|^{\alpha}}{|x|^{2}} d\bar\mu= o(1).
\]
Putting everything together, we have 
\[
\int_{\tilde\Sigma_{k}} \frac{\bar g(\xi,\bar\nu)}{|x|} d\bar\mu + \int_{\tilde\Sigma_{k}} \frac{\bar g(X,\bar \nu)\bar g(\xi,\bar \nu)}{|x|^{3}} d\bar\mu = -4\pi + o(1).
\]
This estimate contradicts \eqref{eq:HY-detect-dat-mass}. The proof is now finished exactly as in \cite[Section 5]{Huisken-Yau:1996}.
\end{proof}


\section{Estimates for the Willmore deficit} \label{sec:Willmore-def} 

An estimate of the form \eqref{CYHI} has been proven in \cite{Chodosh:large-iso, CESY} for isoperimetric regions. As noted in Appendix C of \cite{Chodosh:thesis}, the proofs carry over to the case where $\Sigma$ is not necessarily outward minimizing. We adapt these ideas below. 

\begin{proposition}\label{prop:IMCF-CY}
Let $(M, g)$ be a connected, complete Riemannian $3$-manifold with non-negative scalar curvature and which is $C^{2}$-asymptotically flat of rate $q>1/2$. Consider a sequence $\{\Sigma_k\}_{k=1}^\infty$ of stable constant mean curvature spheres $\Sigma_k \subset M$ such that $r_0(\Sigma_k)\to\infty$ and $\area(\Sigma_k)\to\infty$. Then,
\begin{align} \label{CYHI}
\area (\Sigma_k)^{\frac{1}{2}} \int_{\Sigma_k} |\mathring h|^{2} d\mu = O(1)
\end{align}
as $k \to \infty$.

\begin{proof}
Since $r_0(\Sigma_k) \to\infty$, we may assume that $(M,g)$ has horizon boundary; cf.~\cite[Lemma 4.1]{Huisken-Ilmanen:2001}. Let $\Omega_k \subset M$ be the unique compact region $\Omega_k \subset M$ with $\Sigma_k = \partial \Omega_k$. Let $\Omega'_k \subset M$ be the strictly minimizing hull of $\Omega_k$ in $(M, g)$; cf.~\cite[p.~371]{Huisken-Ilmanen:2001}. Recall from \cite[Theorem 1.3]{Huisken-Ilmanen:2001} that the boundary $\Sigma'_k$ of $\Omega'_k$ is $C^{1,1}$ and smooth away from $\Sigma_k$. By \cite[(1.15)]{Huisken-Ilmanen:2001}, the weak mean curvature of $\Sigma'_k$ satisfies $H_{\Sigma'_k} = 0$ on $\Sigma'_k\setminus\Sigma_k$ and $H_{\Sigma'_k} = H_{\Sigma_k}$ for $\mathscr{H}^{2}$-a.e.~point of $\Sigma'_k\cap \Sigma_k$. In particular,
\[
\int_{\Sigma'_k} H_{\Sigma'_k}^{2} d\mu \leq \int_{\Sigma_k} H_{\Sigma_k}^{2}d\mu.
\]
There is a weak solution in the sense of \cite[p.~365]{Huisken-Ilmanen:2001} to inverse mean curvature flow starting at $\Sigma'_k$  by   \cite[Lemma 5.6]{Huisken-Ilmanen:2001}. The monotonicity of the Hawking mass along the flow \cite[(5.24)]{Huisken-Ilmanen:2001} in combination with \cite[Lemma 7.3]{Huisken-Ilmanen:2001} gives\footnote{The argument in \cite{Huisken-Ilmanen:2001} requires that $(M, g)$ be asymptotically flat of rate $q = 1$. Note that this case suffices for the application in the proof of Theorem \ref{thm:main}. To cover the full range $q > 1/2$, we can argue exactly as in Appendix H of \cite{CESY}.} 
\[
 \area(\Sigma'_k)^{\frac 12} \left(16\pi - \int_{\Sigma'_k} H_{\Sigma'_k}^{2}d\mu\right) \leq (16\pi)^{ \frac 32} m_{ADM}
\]
so that
\begin{align} \label{aux1:HICY}
 \area(\Sigma'_k)^{\frac 12} \Big( 16\pi - \int_{\Sigma_k} H_{\Sigma_k}^{2}d\mu \Big)  \leq (16\pi)^{ \frac 32} m_{ADM}.
\end{align}
The Christodoulou--Yau estimate \eqref{eq:CY-intro} gives
\begin{align} \label{aux2:HICY}
\frac{2}{3} \int_{\Sigma_k} |\mathring h|^{2} d\mu \leq 16\pi - \int_{\Sigma_k} H_{\Sigma_k}^{2}d\mu. 
\end{align}
Finally, we note that
\begin{align} \label{aux3:HICY}
\area(\Sigma_k) = (1 + o(1)) \area(\Sigma'_k).
\end{align}
Indeed, this follows from the rescaling arguments in Proposition \ref{prop:area-rescaling} and Proposition \ref{prop:area-rescaling-trivial} and a coarse, Euclidean area comparison. The asserted estimate now follows from combining \eqref{aux1:HICY}, \eqref{aux2:HICY}, and \eqref{aux3:HICY}.
\end{proof}
\end{proposition}


\section{A remark on the Minkowski inequality} \label{sec:Minkowski}

The contribution here is an estimate for the remainder in the second order Taylor expansion of the Minkowski quantity 
\[
\int_\Sigma H - \sqrt {16 \pi \area (\Sigma)}
\]
at the unit sphere of Euclidean space. The idea of computing and using the second variation of this quantity appears in the thesis of D.~Perez \cite{Perez:thesis}.

All geometric quantities in this section are computed with respect to the Euclidean metric.  We abbreviate  
\[
S = \{x \in \R^3 : |x| = 1\}.
\]

Let $f  \in C^2(S)$ with $\Vert f\Vert_{C^{1}(S)}$ small, say $\Vert f\Vert_{C^{1}(S)} < 1/2$. Let 
\[
\Sigma = \{ (1 + f(\theta)) \theta : \theta \in S\}. 
\]

\begin{lemma}
We have  
\begin {align} \label{eqn:TaylorMinkowski}
\int_{\Sigma} H - \sqrt{16\pi \area(\Sigma)} =&  
\frac{1}{4\pi} \Big( \int_{S} f \Big)^{2} - \int_{S} f^{2} + \frac{1}{2}  \int_S |\nabla f|^2 \\ 
& \qquad + O (1) \int_{S} (|f|^3 + |\nabla f|^3 +  |\nabla d f| |\nabla f|^2  + |\nabla d f| f^2) \nonumber
\end{align}
as $f\to 0$ in $C^{1}(S)$.

\begin{proof}
Let  
\[
\mathscr{M} (t) = \int_{\Sigma_{t}} H_{\Sigma_t}  - \sqrt{16\pi \area(\Sigma_{t})}
\]
for $t \in [0, 1]$ where $\Sigma_t \subset \R^3$ is the surface 
\[
\Sigma_{t} = \{ (1 + t \, f(\theta)) \theta : \theta \in S\}.
\]
Taylor's theorem gives 
\[
\mathscr{M}(1) = \mathscr{M}(0) + \mathscr{M}'(0) + \frac 12 \mathscr{M}''(0) + \frac 16 \mathscr{M}'''(t)
\]
for some $t \in (0, 1)$. The computation in the proof of Proposition 4.1 in \cite{Perez:thesis} shows that  
\begin{align*}
\mathscr{M}(0) &= 0\\
\mathscr{M}'(0) &= 0\\
 \mathscr{M}''(0) &=  \frac{1}{2\pi} \left(\int_{S} f\right)^{2} - 2 \int_{S} f^{2} + \int_{S} |\nabla f|^{2}.
\end{align*}
To see that $\mathscr{M}'''(t)$ has the asserted form, we recall the quasi-linear structure of mean curvature below. 
\end{proof}
\end{lemma}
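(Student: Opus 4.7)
The plan is to interpolate between the round unit sphere and $\Sigma$ and apply Taylor's theorem to the Minkowski functional along this path. I would set
\[
\Sigma_{t} = \{(1+tf(\theta))\theta : \theta \in S\}, \qquad t \in [0,1],
\]
so that $\Sigma_{0}=S$ and $\Sigma_{1}=\Sigma$, and consider
\[
\mathscr{M}(t) = \int_{\Sigma_{t}} H_{\Sigma_{t}}\, d\mu_{t} - \sqrt{16\pi \area(\Sigma_{t})}.
\]
The assumption $\Vert f \Vert_{C^{1}(S)} < 1/2$ ensures that $\Sigma_{t}$ is smoothly embedded for every $t \in [0,1]$ and that $\mathscr{M} \in C^{3}([0,1])$, so Taylor's theorem with Lagrange remainder gives
\[
\mathscr{M}(1) = \mathscr{M}(0) + \mathscr{M}'(0) + \tfrac{1}{2}\mathscr{M}''(0) + \tfrac{1}{6}\mathscr{M}'''(t_{*})
\]
for some $t_{*} \in (0,1)$.

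The identity $\mathscr{M}(0)=0$ is immediate from $\int_{S}H = 8\pi = \sqrt{16\pi\area(S)}$, while $\mathscr{M}'(0)=0$ expresses that the round sphere is a critical point of the Minkowski functional among radial graphs. For the quadratic term $\mathscr{M}''(0)$ I would appeal to the computation in D.\ Perez's thesis, which yields the stated quadratic form in $f$; alternatively one can simply expand $\int_{\Sigma_{t}}H_{\Sigma_{t}}\,d\mu_{t}$ and $\sqrt{16\pi\area(\Sigma_{t})}$ to second order in $t$ by hand and collect terms.

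The substantive work is the estimate for $\mathscr{M}'''(t_{*})$. The key structural observation is that, in spherical coordinates, $\Sigma_{t}$ is a radial graph over $S$ for which the area element $d\mu_{t}$ and the Euclidean unit normal are smooth functions of $(tf, t\nabla f)$ only, whereas the mean curvature $H_{\Sigma_{t}}$ is a \emph{quasi-linear} second order operator in $tf$: it is affine in $t \nabla df$ with coefficients that are smooth functions of $(tf, t\nabla f)$. Differentiating $\mathscr{M}(t)$ three times and applying the chain rule together with this quasi-linearity, every resulting integrand is a polynomial of total degree three in the symbols $f, \nabla f, \nabla df$ and is \emph{at most linear} in $\nabla df$. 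The $C^{1}$-smallness of $f$ keeps all smooth coefficients uniformly bounded for $t \in [0,1]$, so integrating over $S$ produces precisely the four admissible terms $f^{3}$, $|\nabla f|^{3}$, $|\nabla df| f^{2}$, and $|\nabla df||\nabla f|^{2}$ displayed in the remainder, the mixed intermediate combinations being absorbed into these by Young's inequality.

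The main obstacle is the bookkeeping involved in tracking how many factors of $\nabla df$ can appear after three differentiations of the quasi-linear expression for $H_{\Sigma_{t}} \, d\mu_{t}$. Ruling out the potentially dangerous terms $|\nabla df|^{3}$, $|\nabla df|^{2} f$, and $|\nabla df|^{2} |\nabla f|$ is exactly where linearity in the top-order derivative is essential; otherwise, no pointwise estimate on $\mathscr{M}'''$ in terms of a cubic expression in $f$ could possibly hold.
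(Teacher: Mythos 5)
Your proposal matches the paper's proof: same interpolation $\Sigma_t$, same Taylor expansion with Lagrange remainder, same appeal to Perez's thesis for $\mathscr{M}(0)$, $\mathscr{M}'(0)$, $\mathscr{M}''(0)$, and the same use of the quasi-linear (affine in $\nabla df$, smooth in $(f,\nabla f)$) structure of $H_{\Sigma_t}\,d\mu_t$ to bound $\mathscr{M}'''$. You simply spell out more explicitly than the paper does why three $t$-derivatives produce integrands of total degree three that are at most linear in $\nabla df$, which is a helpful elaboration but not a different route.
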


We briefly recall the quasi-linear structure of mean curvature that is used in the proof of \eqref{eqn:TaylorMinkowski}. Let $U \subset \R^m$ be open. The mean curvature vector field of an immersion $\varphi: U \to \R^n$ is given by  
\[
g^{ij} (\partial_i \partial_j \varphi - \Gamma_{ij}^k \partial_k \varphi).
\]
Here, 
\[
g_{ij} = (\partial_i \varphi) \cdot (\partial_j \varphi)
\]
are the components of the first fundamental form, $g^{ij}$ are the components of its inverse, and 
\[
\Gamma_{ij}^k = \frac{1}{2} g^{k \ell} \big ( \partial_i g_{\ell j } + \partial_j g_{ i \ell} - \partial_\ell g_{ij} \big)
\]
are the Christoffel symbols. This expression is \textsl{linear} in the second order partial derivatives of the immersion. \\

Next, note that there is a universal constant $\delta \in (0, 1/2)$ -- independent of a particular choice of $f$ -- with the following property. For every $v \in  \overline {B_\delta (0)}$, the translate $\Sigma + v$ is a $C^2$ graph over $S$. \\

\begin{lemma} Assume that $\Sigma \subset \R^3$ is the graph of a function $f \in C^1(S)$ with small norm. There is $v \in \R^3$ small such that the translate $\Sigma' = \Sigma + v$ is the graph of a function $f' \in C^1(S)$ with small norm so that 
\begin{align} \label{eqn:moments}
\int_S x^1 \, f'  = \int_S x^2 \, f'  = \int_S x^3 \, f'  =0.
\end{align}
We may homothetically rescale $\Sigma'$ slightly to the graph of a function $f'' \in C^1(S)$  where
\begin{align} \label{eqn:momentzero}
\int_S f'' = 0 \qquad \text{ and } \qquad \int_S x^1 \, f''  = \int_S x^2 \, f''  = \int_S x^3 \, f''  =0.
\end{align}

\begin{proof}
Let $f_v \in C^1(S)$ be such that  
\[
\{ v + (1 + f(\theta)) \theta : \theta \in S \} = \Sigma + v = \{  (1 + f _v (\theta)) \theta : \theta \in S\}.
\]
For $f=0$, note that $f_{v} (\theta) =  \theta \cdot v+ O(|v|^{2})$. In general, we find 
\[
f_{v} (\theta) =  \theta \cdot v + O(|v|^{2}) + O(\Vert f \Vert_{C^{1}}).
\]
Let $\delta \in (0, 1/2)$ be small. Consider the map
\[
\Xi : \{ v \in \R^3 : |v| \leq \delta\} \to \mathbb{R}^{3} \qquad \text{ given by } \qquad v \mapsto v - \frac{3}{4 \pi}\int_S (x^1, x^2, x^3) f_v.
\]
It follows that 
\[
\Xi(v) = O(|v|^{2}) + O(\Vert f\Vert_{C^{1}}). 
\]
If $\Vert f\Vert_{C^{1}}$ is sufficiently small, then this map has a fixed point $v$ by Brouwer's fixed point theorem. 

Finally, if $\Sigma$ is the graph of $f' \in C^1(S)$ satisfying \eqref{eqn:moments}, then (the homothetic rescaling) $\lambda \, \Sigma$ is the graph of $\lambda -1 + \lambda f'$. For any $\lambda>0$, this still satisfies \eqref{eqn:moments}. Choosing
\[
\lambda = \left( 1 + \frac{1}{4\pi} \int_S f'\right)^{-1} = 1 + O(\Vert f\Vert_{C^0}),
\]
we find $f''$ satisfying \eqref{eqn:momentzero}. 
\end{proof}
 
\end{lemma}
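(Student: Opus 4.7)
The plan is to secure the translation $v$ by a fixed-point argument in $\R^3$, and then dispatch the rescaling step by explicit construction. For each $v$ in a small Euclidean ball, I would define $f_v \in C^1(S)$ implicitly by
\[
\Sigma + v = \{ (1 + f_v(\theta))\theta : \theta \in S \},
\]
which is well-defined provided $\Vert f \Vert_{C^1(S)}$ and $|v|$ are sufficiently small. The desired moment conditions \eqref{eqn:moments} then read $\int_S x^i f_v = 0$ for $i = 1, 2, 3$.

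The first step is to establish the expansion of $f_v$ in $(v, f)$. When $f \equiv 0$, the translate $S + v$ is the unit sphere centered at $v$, and a direct computation yields $f_v(\theta) = \theta \cdot v + O(|v|^2)$; for general small $f$, a routine perturbation argument promotes this to the uniform estimate
\[
f_v(\theta) = \theta \cdot v + O(|v|^2) + O(\Vert f \Vert_{C^1(S)}).
\]
Using $\int_S (\theta \cdot v)\, x^i\, d\theta = \tfrac{4\pi}{3} v^i$, I would then define
\[
\Xi(v) = v - \frac{3}{4\pi} \int_S x\, f_v\, d\theta,
\]
whose fixed points are exactly the $v$ realizing \eqref{eqn:moments}. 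The expansion above rewrites as $\Xi(v) = O(|v|^2) + O(\Vert f \Vert_{C^1(S)})$, so for $\Vert f \Vert_{C^1(S)}$ sufficiently small and a suitable $\delta > 0$, the map $\Xi$ sends the closed ball $\overline{B_\delta(0)} \subset \R^3$ into itself and admits a fixed point by Brouwer's theorem. Alternatively, one can check that $D_v \Xi$ has small operator norm on this ball and conclude via the Banach contraction principle.

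For the rescaling step, given $f'$ with vanishing first moments, I would look for $\lambda$ close to $1$ so that the homothety $\lambda \Sigma'$ is the graph of $f''(\theta) := \lambda(1 + f'(\theta)) - 1$. The constraint $\int_S f'' = 0$ then determines $\lambda = 4\pi / \bigl(4\pi + \int_S f'\, d\theta\bigr)$, which is well-defined and close to $1$ since $\Vert f' \Vert_{C^1(S)}$ is small. Preservation of the first moments is automatic: because $\int_S x^i\, d\theta = 0$ on $S$, one gets $\int_S x^i f'' = \lambda \int_S x^i f' = 0$. The one technical point requiring care is the uniform expansion of $f_v$; this amounts to inverting the smooth, near-identity map $\theta \mapsto (1 + f(\theta))\theta + v$ onto $S$ radially near $(v, f) = (0, 0)$, and is handled by the standard implicit-function theorem.
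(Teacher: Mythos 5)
Your proof takes essentially the same route as the paper: you define $f_v$ by the same radial graph relation, derive the same expansion $f_v(\theta)=\theta\cdot v+O(|v|^2)+O(\Vert f\Vert_{C^1})$, build the same map $\Xi(v)=v-\tfrac{3}{4\pi}\int_S x\,f_v$ with the normalization $\int_S(\theta\cdot v)x^i=\tfrac{4\pi}{3}v^i$, and close with a fixed-point argument. Your additional details---invoking Brouwer or Banach for the fixed point, and the explicit computation $f''=\lambda(1+f')-1$ with $\lambda=4\pi/(4\pi+\int_S f')$ for the rescaling, together with the observation that $\int_S x^i=0$ preserves the first-moment condition---are correct and simply make explicit what the paper leaves implicit.
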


Clearly,
\begin{align}
\int_{S} ( |\nabla d f| |\nabla f|^2  + |\nabla d f| f^2 + |\nabla f|^3 + |f|^3) = o(1)  \int_S (f^2 + |\nabla f|^2 +  |\nabla d f|^2)
\end{align}
as $f\to 0$ in $C^{1}(S)$. Moreover, the Bochner formula on the sphere gives  
\begin{align} \label{eqn:Bochner}
\int_S  |\nabla d f|^2 = 2 \int_S |\nabla d f - (\Delta f /2) g|^2 + 2 \int_S |\nabla f|^2.
\end{align}

\begin{lemma} \label{lem:tracefreehessian}
We have 
\[
 \int_S |\nabla d f - (\Delta f / 2) g|^2  \leq O(1) \int_\Sigma |\mathring h|^2 + o (1) \int_S ( f^2 + |\nabla f|^2)
\]
as $f \to 0$ in $C^1(S)$. 
\end{lemma}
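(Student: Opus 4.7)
The plan is to identify $\mathring h$ with $-(\nabla d f - (\Delta f/2)g)$ up to a manageable error and then run an absorption argument. Since the unit sphere $S$ is umbilic, $\mathring h$ vanishes identically when $f \equiv 0$. Using the radial graph parametrization $\Phi(\theta) = (1 + f(\theta))\theta$ together with the identity $\partial_i \partial_j \theta = \Gamma_{ij}^k \theta_k - g_{ij}\theta$ on $S$, a direct linearization at $f = 0$ yields
\[
\delta h_{ij} = -\nabla_i \nabla_j f + f\, g_{ij}, \qquad \delta H = -\Delta f - 2f, \qquad \delta g_{ij} = 2f\, g_{ij},
\]
and combining these gives
\[
\delta \mathring h_{ij} = -\bigl(\nabla_i \nabla_j f - \tfrac12 \Delta f\, g_{ij}\bigr).
\]
I therefore write $\mathring h = -(\nabla d f - (\Delta f/2)g) + E$ and focus on bounding the remainder $E$.

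The key structural claim is
\[
|E| \leq C\,\|f\|_{C^1(S)}\, |\nabla d f| + C\, |\nabla f|\,(|f| + |\nabla f|).
\]
This rests on two features of the quasi-linear form of mean curvature recalled in the paragraph preceding the lemma. First, $h_{ij}$, the induced metric on $\Sigma$, and $H$ are each linear in $\nabla d f$ with coefficients that are smooth functions of $(f, \nabla f)$; consequently $\mathring h$ is itself linear in $\nabla d f$. The coefficient of $\nabla d f$ inside $E$ is the deviation of this nonlinear coefficient from its value at $(f, \nabla f) = (0,0)$, and hence is $O(\|f\|_{C^1})$. Second, when $f \equiv c$ is constant, $\Sigma$ is a round sphere of radius $1 + c$ and $\mathring h \equiv 0$; this forces the part of $E$ not involving $\nabla d f$ to vanish whenever $\nabla f = 0$, so a pointwise Taylor expansion in $\nabla f$ bounds it by $C|\nabla f|(|f| + |\nabla f|)$. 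Squaring and using $\|f\|_{C^1} = o(1)$ gives
\[
|E|^2 \leq o(1)\,|\nabla d f|^2 + o(1)\,(f^2 + |\nabla f|^2)
\]
as $f \to 0$ in $C^1(S)$.

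The absorption step finishes the argument. Cauchy--Schwarz applied to the identity $\mathring h + (\nabla d f - (\Delta f/2)g) = E$ gives the pointwise bound $|\nabla d f - (\Delta f/2)g|^2 \leq 2|\mathring h|^2 + 2|E|^2$. Since the graph map identifies the area forms and the induced norms on $\mathring h$ on $S$ and $\Sigma$ up to a factor $1 + o(1)$, integrating over $S$ yields
\[
\int_S |\nabla d f - (\Delta f/2)g|^2 \leq O(1)\int_\Sigma |\mathring h|^2 + o(1)\int_S |\nabla d f|^2 + o(1)\int_S (f^2 + |\nabla f|^2).
\]
Inserting the Bochner identity \eqref{eqn:Bochner} into the second term on the right rewrites it as $o(1)\int_S |\nabla d f - (\Delta f/2)g|^2 + o(1)\int_S |\nabla f|^2$, and the first piece is absorbed into the left-hand side. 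The one delicate point in the argument is the umbilicity observation used in the second paragraph: without it, the part of $E$ free of $\nabla d f$ could a priori contribute a term of size $O(f^2)$ rather than $o(1)\,f^2$, and the inequality would fail to close.
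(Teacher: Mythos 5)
Your proof is correct and follows the same basic route as the paper: decompose $\mathring h$ into its linearization $-(\nabla d f - (\Delta f/2)g)$ plus a remainder that is linear in $\nabla^2 f$ with small coefficient, and then absorb the $\nabla^2 f$ part via the Bochner identity \eqref{eqn:Bochner}. The paper quotes the decomposition $\mathring h = (\nabla d f - (\Delta f/2)g) + \beta_1(f,\nabla f)\ast\nabla^2 f + \beta_2(f,\nabla f)$ from Lemma 2.3 of \cite{PacardXu}, stating only that $\beta_1$ and $\beta_2$ vanish when both their arguments vanish.

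Where you genuinely add something is the umbilicity observation. As stated, the paper's vanishing condition $\beta_2(0,0)=0$ gives only $|\beta_2|\le C(|f|+|\nabla f|)$, hence $\int_S\beta_2^2 = O(1)\int_S(f^2+|\nabla f|^2)$, which is $O(1)$ rather than the claimed $o(1)$. Your argument — that $f\equiv c$ yields a round sphere, so $\mathring h$, $\nabla d f$, and hence the entire remainder $E$ vanish, forcing $\beta_2(f,0)\equiv 0$ — together with the vanishing of the $\nabla f$-linearization at $(0,0)$, yields the finer bound $|\beta_2|\le C|\nabla f|(|f|+|\nabla f|)$, which squares to $o(1)(f^2+|\nabla f|^2)$. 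This is precisely what is needed for the $o(1)$ coefficient in the statement of the lemma and is either implicit in \cite{PacardXu} or a looseness in the paper's phrasing. (It is worth noting that the downstream application in Proposition \ref{prop:improvedminkowsi} would in fact tolerate $O(1)$ here, since the lemma is multiplied against an $o(1)$ factor; but your version justifies the lemma as literally stated.) The rest of your argument — the first-order linearization $\delta\mathring h_{ij}=-(\nabla_i\nabla_j f - \tfrac12\Delta f\,g_{ij})$, the Cauchy--Schwarz split, the comparison of norms and area forms up to $1+o(1)$, and the absorption of $o(1)\int_S|\nabla d f|^2$ via Bochner — is all correct and matches the paper's intent.
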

\begin{proof}
Lemma 2.3 in \cite{PacardXu} shows that
\begin{align*}
\mathring h &= \nabla df - (\Delta f/2) g \\
& \qquad + O(|f||\nabla d f| + |\nabla f||\nabla d f|) + O(f^2 + |\nabla f|^2).
\end{align*}
We now use \eqref{eqn:Bochner} to absorb the error terms containing $\nabla d f$. 
\end{proof}

From now on, we assume that the moment conditions  \eqref{eqn:moments} and \eqref{eqn:momentzero} hold. Then 
\[
\int_S |\nabla f|^2 \geq 6 \int_S f^2
\]
from which the estimate 
\[
\frac{1}{4\pi} \Big( \int_{S} f \Big)^{2} -  \int_{S} f^{2} + \frac{1}{2} \int_S |\nabla f|^2 \geq \frac{1}{6} \int_S (f^2 +  |\nabla f|^2 )
\]
follows. Putting this together, we obtain 
\[
\sqrt{16\pi \area(\Sigma)} \leq \int_{\Sigma} H  + o(1) \int_{S} |\nabla df - (\Delta f/2)g|^{2} - \frac 1 7 \int_{S} (f^{2} + |\nabla f|^{2})
\]
as $f \to 0$ in $C^1(S)$.  In combination with Lemma \ref{lem:tracefreehessian}, we obtain our final result in this section.

\begin{proposition}\label{prop:improvedminkowsi}
We have that 
\begin {align} \label{improvedminkowsi}
\sqrt{16\pi \area(\Sigma)} \leq   \int_{\Sigma} H +o (1) \int_\Sigma |\mathring h|^2
\end{align}
as $\Sigma$ converges to $S$ in $C^1$. 
\end{proposition}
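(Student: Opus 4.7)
The plan is to bolt together the preparatory lemmas of this section in one telescoping computation. After a translation and a small homothety I may assume, by the fixed-point lemma established above, that the graph function $f\in C^{2}(S)$ representing $\Sigma$ over $S$ satisfies the moment conditions
\[
\int_{S} f = \int_{S} x^{1} f = \int_{S} x^{2} f = \int_{S} x^{3} f = 0;
\]
neither side of \eqref{improvedminkowsi} changes under these modifications (the homothety only shifts things by factors of $1+o(1)$, which can be absorbed on the right).

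First I would plug these moment conditions into the Taylor expansion \eqref{eqn:TaylorMinkowski}. The first three spherical harmonics being killed, Poincar\'e's inequality on $S$ gives $\int_{S}|\nabla f|^{2}\geq 6\int_{S}f^{2}$, which upgrades the quadratic part of \eqref{eqn:TaylorMinkowski} to the coercive bound $\tfrac{1}{3}\int_{S}(f^{2}+|\nabla f|^{2})$. The cubic remainder in \eqref{eqn:TaylorMinkowski} is, under $C^{1}$-smallness, an $o(1)$ multiple of $\int_{S}(f^{2}+|\nabla f|^{2}+|\nabla d f|^{2})$; combined with the coercive quadratic term I obtain an intermediate inequality of the shape
\[
\sqrt{16\pi\,\area(\Sigma)} \;\leq\; \int_{\Sigma} H \;+\; o(1)\int_{S}|\nabla d f|^{2} \;-\; \tfrac{1}{4}\int_{S}\bigl(f^{2}+|\nabla f|^{2}\bigr).
\]

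The crux is then to trade the Hessian term against the trace-free second fundamental form. I would use the Bochner identity \eqref{eqn:Bochner} on the unit sphere to split
\[
\int_{S}|\nabla d f|^{2} = 2\int_{S}\bigl|\nabla d f - (\Delta f/2)g\bigr|^{2} + 2\int_{S}|\nabla f|^{2},
\]
and then invoke Lemma \ref{lem:tracefreehessian} to dominate the trace-free Hessian by $O(1)\int_{\Sigma}|\mathring h|^{2} + o(1)\int_{S}(f^{2}+|\nabla f|^{2})$. Multiplying by the $o(1)$ prefactor, the Hessian contribution becomes $o(1)\int_{\Sigma}|\mathring h|^{2}$, while all the residual $\int_{S}(f^{2}+|\nabla f|^{2})$ terms carry an $o(1)$ coefficient and are swallowed by the negative coercive term $-\tfrac{1}{4}\int_{S}(f^{2}+|\nabla f|^{2})$ once $f$ is sufficiently small in $C^{1}$. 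Rearranging yields \eqref{improvedminkowsi}.

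The only subtlety is to confirm that every error term entering the final display is genuinely $o(1)$ with respect to the $C^{1}$-size of $f$, and in particular that the prefactor in front of $\int_{S}|\nabla d f|^{2}$ is not merely $O(1)$; this is what forces us to normalize via the moment conditions \emph{before} applying Bochner, so that the quadratic form is positive definite and can absorb the residual $\int_{S}(f^{2}+|\nabla f|^{2})$ produced by Lemma \ref{lem:tracefreehessian}. Once the bookkeeping is arranged in this order, the proposition falls out by a one-line rearrangement.
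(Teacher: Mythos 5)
Your proposal is correct and mirrors the paper's argument step for step: normalize $f$ by the moment conditions via the fixed-point lemma, use Poincar\'e on $S$ to make the quadratic form in \eqref{eqn:TaylorMinkowski} coercive, control the cubic remainder by $o(1)\int_S(f^2+|\nabla f|^2+|\nabla df|^2)$, split $\int_S|\nabla df|^2$ by Bochner, and trade the trace-free Hessian for $\int_\Sigma|\mathring h|^2$ via Lemma \ref{lem:tracefreehessian}, absorbing the residual $\int_S(f^2+|\nabla f|^2)$ into the negative coercive term. The only point you add beyond the paper is the brief verification that the translation and small homothety do not disturb either side of \eqref{improvedminkowsi}, which is accurate since $\int_\Sigma H - \sqrt{16\pi\area(\Sigma)}$ scales linearly while $\int_\Sigma|\mathring h|^2$ is scale-invariant.
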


\begin {remark}
For $\Sigma$ close enough to $S$ in $C^2$,  by the classical Minkowski inequality for \textsl{convex surfaces}, 
\[
\sqrt{16\pi \area(\Sigma)} \leq   \int_{\Sigma} H.
\]
The Minkowski inequality has been generalized to mean-convex star-shaped surfaces \cite{GuanLi,GuanMaTrudingerZhu,BrendleHungWang}, see also Theorem 3.3 in  \cite{Perez:thesis}, as well as to outward minimizing surfaces \cite{Huisken:minkowski}.
\end {remark}


\bibliography{bib} 
\bibliographystyle{amsplain}
\end{document}